\newcommand{\comment}[1]{}
\newcommand{\ind}{{\bf 1}}
\def\indd#1{{\ind}_{\{#1\}}}
\def\inddd#1{{\ind}_{\left\{#1\right\}}}
\def\indn#1{\{#1_n\}_{n\in\N}}
\newcommand{\proba}{\mathbb P}
\renewcommand{\P}{\mathbb P}
\newcommand{\esp}{{\mathbb E}}
\newcommand{\var}{{\rm{Var}}}
\newcommand{\eqnh}{\begin{eqnarray*}}
\newcommand{\eqne}{\end{eqnarray*}}
\newcommand{\eqnhn}{\begin{eqnarray}}
\newcommand{\eqnen}{\end{eqnarray}}
\newcommand{\equh}{\begin{equation}}
\newcommand{\eque}{\end{equation}}
\def\summ#1#2#3{\sum_{#1 = #2}^{#3}}
\def\prodd#1#2#3{\prod_{#1 = #2}^{#3}}
\def\sif#1#2{\sum_{#1=#2}^\infty}
\def\bveee#1#2#3{\bigvee_{#1=#2}^{#3}}
\def\bweee#1#2#3{\bigwedge_{#1=#2}^{#3}}
\newcommand{\eqd}{\stackrel{d}{=}}
\def\topp#1{^{(#1)}}
\def\ccbb#1{\left\{#1\right\}}
\def\sccbb#1{\{#1\}}
\def\pp#1{\left(#1\right)}
\def\spp#1{(#1)}
\def\bb#1{\left[#1\right]}
\def\floor#1{\left\lfloor #1 \right\rfloor}
\def\vv#1{{\boldsymbol #1}}
\def\qmand{\quad\mbox{ and }\quad}
\def\qmwith{\quad\mbox{ with }\quad}
\def\mfa{\mbox{ for all }}
\def\mmas{\mbox{ as }}
\def\wt#1{\widetilde{#1}}
\def\wb#1{\overline{#1}}
\def\what#1{\widehat{#1}}
\def\limn{\lim_{n\to\infty}}
\def\limsupn{\limsup_{n\to\infty}}
\def\liminfn{\liminf_{n\to\infty}}
\def\weakto{\Rightarrow}
\def\R{{\mathbb R}}
\def\N{{\mathbb N}}
\def\calF{\mathcal F}
\def\calG{\mathcal G}
\def\calK{\mathcal K}
\def\calM{\mathcal M}
\def\calN{\mathcal N}
\def\calR{\mathcal R}
\def\M{{\mathbb M}}
\def\mab{\calM_{\alpha,\beta}}
\def\Leb{{\rm Leb}}
\def\dH{d_{\rm H}}
\def\sm{{\rm SM}}
\def\vvA{{\vv A}}
\def\vvdelta{{\vv\delta}}
\newtheorem{Thm}{Theorem}[section]
\newtheorem{Lem}[Thm]{Lemma}
\newtheorem{Prop}[Thm]{Proposition}
\theoremstyle{definition}
\newtheorem{Rem}[Thm]{Remark}
\newtheorem{Example}[Thm]{Example}
\numberwithin{equation}{section}
\title[Karlin random sup-measures]{A family of random sup-measures \\with long-range dependence}
\date{\today}
\author{Olivier Durieu}
\address{
Olivier Durieu\\
Institut Denis Poisson, UMR-CNRS 7013\\
Universit\'e de Tours, Parc de Grandmont, 37200 Tours, France.
}
\email{olivier.durieu@univ-tours.fr}
\author{Yizao Wang}
\address
{
Yizao Wang\\
Department of Mathematical Sciences\\
University of Cincinnati\\
2815 Commons Way\\
Cincinnati, OH, 45221-0025, USA.
}
\email{yizao.wang@uc.edu}
\begin{document}\sloppy

\begin{abstract}
A family of self-similar and translation-invariant random sup-measures with long-range dependence are investigated. They are shown to arise as the limit of the empirical random sup-measure of a stationary heavy-tailed process, inspired by an infinite urn scheme, where same values are repeated at several random locations.
The random sup-measure reflects the long-range dependence nature of the original process, and in particular characterizes how locations of extremes appear as long-range clusters represented by random closed sets. A limit theorem for the corresponding point-process convergence is established. 
\end{abstract}
\keywords{random sup-measure, random closed set, stationary process, point process convergence, regular variation, long-range dependence}
\subjclass[2010]{Primary, 60G70, 
60F17; 
  Secondary, 60G57. 
   }

\maketitle

\section{Introduction}
There is a recently renewed interest in limit theorems for extreme values of 
stationary processes in the presence of long-range dependence \citep{beran13long,pipiras17long,samorodnitsky16stochastic}. Let $\{X_n\}_{n\in\N}$ be a stationary process. In extreme value theory, it is by now a classical problem to investigate the limit of the partial maxima $\{\max_{i=1,\dots,\floor{nt}}X_i\}_{t\in[0,1]}$ as a process of $t\in[0,1]$, after appropriate normalization, as $n\to\infty$. It is further understood that such functional limit theorems are better illustrated in terms of convergence of point processes, in particular in the case when the dependence of the extremes of $\{X_n\}_{n\in\N}$ is weak. For a simple and yet representative example, take $\indn X$ to be i.i.d.~heavy-tailed random variables such that $\proba(X_1>x)\sim x^{-\alpha}$ as $x\to\infty$ with tail index $\alpha\in(0,\infty)$.
It is well known that 
\equh\label{eq:independent}
\summ i1n \delta_{(X_i/n^{1/\alpha},i/n)} \weakto \sif \ell1 \delta_{(\Gamma_\ell^{-1/\alpha},U_\ell)},
\eque
where $\{(\Gamma_\ell,U_\ell)\}_{\ell\in\N}$
is a measurable enumeration of points from a Poisson point process on $\R_+\times[0,1]$ with intensity $dxdu$. Such a point-process convergence provides a detailed description of the asymptotic behavior of extremes, by which we mean broadly the top order statistics instead of the largest one alone: the top order statistics normalized by $n^{1/\alpha}$ converge weakly  to $\Gamma_1^{-1/\alpha},\Gamma_2^{-1/\alpha},\dots$, and their locations are asymptotically independent and uniformly distributed over $[0,1]$ \citep{lepage81convergence}. Such a picture is representative for the general situation where $\indn X$ have weak dependence. Classical references now include \citep{resnick87extreme,leadbetter83extremes,dehaan06extreme}, among others.

The recent advances along this line, however, focus on the case when the stationary process $\indn X$ has long-range dependence in the literature. The long-range dependence here,  roughly speaking, means that with the same marginal law, the normalization of maxima is of a different order from $n^{1/\alpha}$ so that a non-degenerate limit arises \citep{samorodnitsky04extreme,samorodnitsky16stochastic}.  In the seminal work of \citet{obrien90stationary}, summarizing a series of developments in the 80s, it has been pointed out that all possible non-degenerate limits of extremes of a stationary sequence can be fit into the framework of convergence of {\em random sup-measures}. The framework could be viewed as a counterpart of the Lamperti's theorem \citep{lamperti62semi} for extremes, in the sense that the limit random sup-measures are necessarily shift-invariant and self-similar. This framework of course includes the case \eqref{eq:independent}, and the corresponding limit random sup-measure on $[0,1]$
can be represented as
\equh\label{eq:completely_random}
\calM_\alpha(\cdot) = \sup_{\ell\in\N}\frac1{\Gamma_\ell^{1/\alpha}}\inddd{U_\ell\in\,\cdot\,},
\eque
or more generally as a random sup-measure on $\R$ in the same notation with $\{(\Gamma_\ell,U_\ell)\}_{\ell\in\N}$ a Poisson point process on $\R_+\times\R$ with intensity $dxdu$. 
In this case, furthermore, the limit random sup-measure is {\em independently scattered} (a.k.a.~completely random) and $\alpha$-Fr\'echet, that is, its values over disjoint sets are independent and for every bounded open set $A$, $\calM_\alpha(A)$ is $\alpha$-Fr\'echet distributed with $\proba(\calM_\alpha(A)\le x) = \exp(-\Leb(A)x^{-\alpha})$, $x>0$. Independently scattered random sup-measures are fundamental in stochastic extremal integral representations of max-stable processes \citep{stoev06extremal}. In general, the random sup-measure arising from a stationary sequence may not be independently scattered, or even Fr\'echet \citep{samorodnitsky17extremal}.

However, within the general framework of convergence of random sup-measures, to the best of our knowledge it is only very recently that other concrete non-trivial examples have been completely worked out.  In a series of papers \citep{owada15maxima,lacaux16time,samorodnitsky17extremal}, the extremes of a well-known challenging example of heavy-tailed stationary processes with long-range dependence have been completely characterized in terms of limit theorems for random sup-measures. For this example, the limit random sup-measure obtained by  \citet{lacaux16time} takes the form 
\equh\label{eq:LS0}
\calM^{\rm sr}_{\alpha,\beta}(\cdot) = \sup_{\ell\in\N}\frac1{\Gamma_\ell^{1/\alpha}}\inddd{\wt R_\ell\topp\beta\cap \,\cdot\, \ne\emptyset},
\eque
where $\{\Gamma_\ell\}_{\ell\in\N}$ are as before, $\{\wt R_\ell\topp\beta\}_{\ell\in\N}$
are i.i.d.~random closed sets of $[0,1]$, each consisting of a randomly shifted $(1-\beta)$-stable  regenerative set (a stable regenerative set is the closure of a stable subordinator; see   Example~\ref{Rem:LS} below  for a complete description of $\wt R_\ell\topp\beta$), and the two sequences are independent. We refer to this family of random sup-measures as {\em stable-regenerative random sup-measures} in this paper. 
More precisely, $\calM^{\rm sr}_{\alpha,\beta}$ arises in limit theorems for a discrete model with parameters $\alpha>0$, $\beta\in(1/2,1)$ \citep{lacaux16time},
and it can be naturally extended to all $\beta\in(0,1)$ (for the original problem in \citep{lacaux16time} with $\beta\in(0,1/2)$, a more complicated random sup-measure of non-Fr\'echet type is shown to arise in the limit in \citep{samorodnitsky17extremal}; note also that a different parameterization $\wt \beta = 1-\beta$ was used in \citep{samorodnitsky17extremal}).

One could draw a comparison between \eqref{eq:completely_random} and \eqref{eq:LS0} by viewing each uniform random variable $U_i$ in \eqref{eq:completely_random} as a random closed set consisting of a singleton point.
From this point of view, for the stable-regenerative random sup-measures, the random closed sets 
$\{\wt R_\ell\topp\beta\}_{\ell\in\N}$ 
represent the limit law of positions of extremes, and in this case they reveal a much more intriguing structure: for example,  each $\wt R_\ell\topp\beta$, as randomly shifted $(1-\beta)$-stable
regenerative set, is uncountably infinite and with Hausdorff dimension $1-\beta$ almost surely. They reflect the picture that each top order statistic shows up at infinitely many different locations, even unbounded if $\calM^{\rm sr}_{\alpha,\beta}$ is viewed as a random sup-measure on $\R$, in a sharp contrast to the situation of independently scattered random sup-measure \eqref{eq:completely_random} where each top order statistic occurs at a unique random location. 

We refer to the phenomena that each top order statistic may show up at multiple and possibly infinitely many  locations by {\em long-range clustering}.  Clustering of extremes have been studied before, but in most examples clusters are {\em local}  in the sense that, roughly speaking, each top order statistic is replaced by a cluster consisting of several correlated values at the same time point, due to certain local dependence structure of the original model (see e.g.~\citep{hsing98exceedance,leadbetter83extremes}).

In this paper, by examining another model of heavy-tailed stationary processes, we prove the convergence of empirical random sup-measures to a family of random sup-measures, exhibiting long-range clustering.  We refer to this family as the {\em Karlin random sup-measures}, denoted by $\mab$ with $\alpha>0$,  $\beta\in(0,1)$ (see \eqref{eq:Karlin_rsm01}).
These random sup-measures are also in the form of \eqref{eq:LS0}: now each random closed set $\wt R_\ell\topp\beta$ is replaced by a new one
consisting of a random number of independent uniform random variables, and hence its complexity is between the independently scattered random sup-measures \eqref{eq:completely_random} and stable-regenerative random sup-measures \eqref{eq:LS0}. In the literature, the Karlin random sup-measures have been considered recently by \citet{molchanov16max}, from the aspect of extremal capacity functionals. 

The Karlin random sup-measures arise in our investigation on the so-called heavy-tailed Karlin model, a variation of an infinite urn scheme investigated by \citet{karlin67central}. The model is a stationary heavy-tailed process where each top order statistic shows up at possibly multiple locations. It has been known to have long-range dependence, and functional central limit theorems 
for related partial sums have been recently investigated in \citep{durieu16infinite,durieu17infinite}. Here, for the extremes, we establish a limit theorem (Theorem~\ref{thm:1}) of point-process convergence encoding the values and corresponding locations of the stationary process  as in \eqref{eq:independent}, with now locations represented by random closed sets.  In particular, the joint convergence describes the long-range clustering of the corresponding order statistics of the Karlin model, and as an immediate consequence the convergence of the empirical random sup-measure to the Karlin random sup-measure in the form of \eqref{eq:LS0} follows (Theorem \ref{thm:2}).

Another way to distinguish the Karlin random sup-measures from independently scattered and stable-regenerative random sup-measures is by noticing that they all have different ergodic properties. This can be understood by comparing the ergodic properties of the induced {\em max-increment processes} of each class. Each max-increment process of a max-stable random sup-measure is a stationary max-stable process. 
Ergodic properties of stationary max-stable processes have been recently investigated in the literature \citep{dombry17ergodic,kabluchko10ergodic,stoev08ergodicity,kabluchko09spectral}. In particular, it is known that the max-increment processes of independently scattered random sup-measures are mixing, those of stable-regenerative random sup-measures are ergodic but not mixing, and here we show that those of Karlin random sup-measures are not ergodic. 

We also notice that the Karlin random sup-measures and stable-regenerative random sup-measures both have the same extremal process as a time-changed standard $\alpha$-Fr\'echet extremal process, and this holds in a much greater generality. 
It is easy to see that the extremal process contains much less information than the corresponding random sup-measure. 
Here we elaborate the relation of the two by showing
that for all self-similar Choquet random sup-measures (defined in Section~\ref{sec:choquet}), 
the associated extremal processes are time-changed standard extremal processes (Proposition \ref{prop:extremal} in the appendix).

The paper is organized as follow. A general class of random sup-measures, the so-called Choquet random sup-measures, is presented in Section~\ref{sec:choquet}. 
In Section \ref{sec:RSM}, we introduce the Karlin random sup-measures. In Section \ref{sec:Karlin}, we introduce the heavy-tailed Karlin model, and state our main results. The proofs are provided in Section \ref{sec:proofs}. In Section \ref{sec:discussions} we discuss related random sup-measures having the same extremal process.
The appendix is devoted to a general result concerning the relation between Choquet random sup-measures and their extremal processes.
Some related background on random closed sets and random sup-measures are provided below.

\subsection*{Preliminary background}
We start with spaces of closed sets. Our main reference is \citep{molchanov05theory}. We shall consider the space of all closed subsets of a given metric space $E$, denoted by $\calF(E)$, with only $E = [0,1]$, $\R$ or $\R_+ :=[0,\infty)$ in this paper. The space $\calF \equiv \calF(E)$ is equipped with the Fell topology. That is, letting $\calG \equiv \calG(E)$ and $\calK \equiv \calK(E)$ denote the open and compact subsets of $\calF$, respectively, the topology generated by the base of sets
\[
\calF_G:=\ccbb{F\in\calF: F\cap G\ne\emptyset},\quad G\in\calG
\]
and
\[
\calF^K:=\ccbb{K\in\calK:F\cap K = \emptyset},\quad K\in\calK.
\]
The Fell topology is also known as the hit-and-miss topology.
With our choice of $E$ (and more generally when it is locally compact  and Hausdorff second countable), the Fell topology on $\calF(E)$ is metrizable. Hence we define random closed sets as random elements in a metric space \citep{billingsley99convergence}. 
The law of a random closed set $R$ is uniquely determined by 
\[
\varphi(K):=\proba(R\cap K\ne\emptyset), \quad  K\in \calK(E),
\]
where $\calK(E)$ is the collection of all compact subsets of $E$, and $\varphi$ is known as the {\em capacity functional} of $R$. 
Let $\indn R$ and $R$ be a collection of random closed sets in $\calF$. A practical sufficient condition for the weak convergence $R_n\weakto R$ in $\calF(E)$ as $n\to\infty$ is that
\[
\limn \proba(R_n\cap A\ne \emptyset) = \proba(R\cap A \ne \emptyset),
\]
for all $A\subset E$ which is a finite union of bounded open intervals such that $\proba(R\cap \wb A\ne\emptyset) = \proba(R\cap A\ne\emptyset)$ where $\wb A$ is the closure set of $A$ \citep[Corollary 1.6.9]{molchanov05theory}.

Next, we review basics on sup-measures on a metric space $E$. Our main references are \citep{obrien90stationary,vervaat97random}. A sup-measure $m$ on $E$ is defined as a set function from $\calG \equiv \calG(E)$ to $\R_+$ (in general the sup-measure could take negative values, but not in the framework of this paper), and it can be uniquely extended to a set function from {\em all} subsets of $E$ to $\R_+$. We start by recalling the definition of a sup-measure on $\calG$. A set function $m:\calG\to\R_+$ is a sup-measure, if  $m(\emptyset) = 0$ and 
\[
m\pp{\bigcup_\alpha G_\alpha} = \sup_\alpha m(G_\alpha)
\]
for all arbitrary collection of $\{G_\alpha\}_\alpha\subset\calG$. Let $\sm(E)$ denote the space of sup-measures from $\calG\to\R_+$. 
The canonical extension of $m:\calG \to\R_+$ to a sup-measure on all subsets of $E$ is given by
\[
m(A) := \inf_{G\in\calG, A\subset G}m(G)
 \quad\text{for all } A\subset E, A\ne\emptyset.
\]

The {\em sup-vague topology} on $\sm(E)$ is defined such that for $\indn m$ and $m$ elements of $\sm(E)$, $m_n\to m$ as $n\to\infty$ if the following two conditions hold
\begin{align*}
\limsupn m_n(K) & \le m(K), \mfa K\in\calK(E),\\
\liminfn m_n(G) & \ge m(G), \mfa G\in\calG(E).
\end{align*}
This choice of topology makes $\sm(E)$ compact and metrizable. 
We then define random sup-measures again as random elements in a metric space. In particular, $M:\Omega\to \sm(E)$ is a random sup-measure, if and only if $M(A)$ is a $\R_+$-valued random variable for all open bounded intervals $A$ or all compact intervals $A$, with rational end points. Examples of particular importance for us include {\em scaled indicator} random sup-measures in the form of
\[
\zeta\inddd{R\cap\,\cdot\,\ne\emptyset},
\]
where $\zeta$ is a positive random variable and $R$ a random closed set, the two not necessarily independent, and the maximum of a finite number of such scaled-indicators.
A practical sufficient condition for weak convergence in $\sm(E)$ is the following. Let $\indn M$ and $M$ be random sup-measures on $E$. We have $M_n\weakto M$ in $\sm(E)$, if 
\[
(M_n(A_1),\dots,M_n(A_d))\weakto (M(A_1),\dots,M(A_d)),
\]
for all bounded open intervals $A_1,\dots,A_d$ of $E$ such that $M(A_i) = M(\wb A_i)$ with probability one \citep[Theorem 3.2]{obrien90stationary}.

Of particular importance among random sup-measures are {\em Fr\'echet (max-stable) random sup-measures}, which are random sup-measures with Fr\'echet finite-dimensional distributions. Recall that a random variable $Z$ has an $\alpha$-Fr\'echet distribution if $\proba(Z\le z) = \exp(-\sigma z^{-\alpha})$, $z>0$, for some constants $\sigma>0$, $\alpha>0$. A random vector $(Z_1,\dots,Z_d)$ has an $\alpha$-Fr\'echet distribution if all its max-linear combinations $\max_{i=1,\dots,d}a_iZ_i$, for $a_1,\dots,a_d>0$, have $\alpha$-Fr\'echet distributions. Now, a random sup-measure is $\alpha$-Fr\'echet if its joint law on finite sets is $\alpha$-Fr\'echet. Equivalently, an $\alpha$-Fr\'echet random sup-measure on $E$ can be viewed as a set-indexed $\alpha$-Fr\'echet max-stable process $\{M(A)\}_{A\subset E}$, that is, a stochastic process of which every finite-dimensional distribution is $\alpha$-Fr\'echet. Fr\'echet random variables and Fr\'echet processes are fundamental objects in extreme value theory, as they arise in limit theorems for extremes of heavy-tailed models  \citep{dehaan84spectral,kabluchko09spectral,resnick87extreme}.

\section{Choquet random sup-measures}\label{sec:choquet} 
A special family of Fr\'echet random sup-measures is the so-called {\em Choquet random sup-measures}, recently introduced by \citet{molchanov16max}. 
It is known that every $\alpha$-Fr\'echet random sup-measure $M$ has the expression
\equh\label{eq:ecf0}
 \proba(M(K)\le z) = \exp\pp{-\frac{\theta(K)}{z^\alpha}},\quad K\in\calK(E),
\eque
where $\theta(K)$ is referred to as the {\em extremal coefficient functional} of $M$. In general, different Fr\'echet random sup-measures may have the same extremal coefficient functional. Given an extremal coefficient functional $\theta$, the so-called Choquet random sup-measure was introduced and investigated in \citep{molchanov16max}, in the form of
\equh\label{eq:Choquet}
\calM(\cdot)\eqd \sup_{\ell\in\N}\frac1{\Gamma_\ell^{1/\alpha}}\inddd{R_\ell\cap \,\cdot\,\ne\emptyset}.
\eque
Here $\{(\Gamma_\ell,R_\ell)\}_{\ell\in\N}$ is a measurable enumeration of points from a Poisson point process on $(0,\infty)\times \calF(E)$ with intensity $dxd\nu$, where $\nu$ is a locally finite measure on $\calF(E)$ uniquely determined by \[
 \nu(\calF_K) \equiv \nu(\ccbb{F\in\calF(E):F\cap K 
 \neq
  \emptyset}) =  \theta(K),\quad K\in\calK(E).
 \] 
The so-defined $\calM$ in \eqref{eq:Choquet} turns out to be an $\alpha$-Fr\'echet random sup-measure with extremal coefficient functional $\theta$, and furthermore its law is uniquely determined by $\theta$.  
It was demonstrated in \citep{molchanov16max} that this family of random sup-measures plays a crucial role among all Fr\'echet random sup-measures from several aspects, and the Choquet theorem plays a fundamental role in this framework, which explains the name. 

In view of limit theorems, Choquet random sup-measures arise naturally in the investigation of extremes of a stationary sequence, including the independently scattered and stable-regenerative random sup-measures (see \eqref{eq:completely_random} and \eqref{eq:LS0} respectively). 
In extreme value theory, many limit theorems are established in terms of extremal processes rather than random sup-measures. Given a general random sup-measure $\calM$, let $\M(t):=\calM([0,t])$, $t\ge0$, denote its associated extremal process. It is well known that $\M$ contains much less information than $\calM$ in general. This is particularly the case in the framework of self-similar Choquet random sup-measures, as their extremal processes are necessarily time-changed versions of a standard $\alpha$-Fr\'echet extremal process.
Recall that a random sup-measure $\calM$ is $H$-self similar for some $H>0$ if 
\equh\label{self-sim}
\calM(\lambda \, \cdot) \eqd \lambda^{H}\calM(\cdot), \mfa \lambda>0.
\eque
By {\em standard $\alpha$-Fr\'echet extremal process}, we mean the extremal process determined by 
the independently scattered random sup-measure $\calM_\alpha$, $\M_\alpha(t):=\calM_\alpha([0,t])$. That is, using the same $\{(\Gamma_\ell,U_\ell)\}_{\ell\in\N}$ as in \eqref{eq:completely_random},
\equh\label{eq:def_ext_pro}
\M_\alpha(t):=
\sup_{\ell\in\N}\frac1{\Gamma_\ell^{1/\alpha}}\inddd{U_\ell\le t},\quad  t\ge 0.
\eque
\begin{Prop}\label{prop:ssChoquet}
For any $H$-self-similar Choquet $\alpha$-Fr\'echet random sup-measure $\calM$ with $H>0$, the corresponding extremal process $\M$ satisfies
\[
\theta([0,1])\ccbb{\M(t)}_{t\ge 0}\eqd  \ccbb{\M_{\alpha}(t^{\alpha H})}_{t\ge 0}.
\]
\end{Prop}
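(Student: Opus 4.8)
\emph{Proof sketch.}
The plan is to compute the finite-dimensional distributions of both processes directly from their Poisson representations and to match them, using self-similarity only to determine the extremal coefficient functional on the initial intervals $[0,t]$. I would work throughout with the concrete series version of $\calM$ in \eqref{eq:Choquet}, so that $\M(t)=\calM([0,t])=\sup_{\ell\in\N}\Gamma_\ell^{-1/\alpha}\inddd{R_\ell\cap[0,t]\ne\emptyset}$ almost surely; in particular $\M$ is nondecreasing, so for fixed $0<t_1<\cdots<t_d$ it is enough to evaluate $\P(\M(t_i)\le z_i,\ 1\le i\le d)$ for $0<z_1\le\cdots\le z_d$, and similarly for $\M_\alpha$.

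First I would read off the form of $\theta$ from self-similarity. Applying \eqref{self-sim} to the single set $[0,1]$ gives $\calM([0,\lambda])\eqd\lambda^H\calM([0,1])$; comparing $\alpha$-Fr\'echet scales via \eqref{eq:ecf0} yields $\theta([0,\lambda])=\lambda^{\alpha H}\theta([0,1])$ for all $\lambda>0$. Since $\calF_{[0,1/n]}\downarrow\calF_{\{0\}}$ and $\nu$ is locally finite, the capacity is continuous from above along this sequence, so $\theta(\{0\})=\lim_n\theta([0,1/n])=\theta([0,1])\lim_n n^{-\alpha H}=0$; this is where $H>0$ is used. Hence $\theta([0,t])=\theta([0,1])\,t^{\alpha H}$ for every $t\ge 0$.

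Next I would compute the joint law. For $0<t_1<\cdots<t_d$ and $0<z_1\le\cdots\le z_d$, the event $\{\M(t_i)\le z_i,\ 1\le i\le d\}$ occurs if and only if the Poisson point process on $(0,\infty)\times\calF(E)$ has no atom in $\ccbb{(x,F):\ x<\max_{1\le i\le d:\,F\cap[0,t_i]\ne\emptyset}z_i^{-\alpha}}$ (with $\max\emptyset:=0$). Partitioning $\calF(E)$ by the first-hitting index $k(F):=\min\ccbb{i:\,F\cap[0,t_i]\ne\emptyset}$ and using that $(z_i)$ is nondecreasing, the inner maximum equals $z_{k(F)}^{-\alpha}$ on $\{k(F)=k\}$; integrating $x$ out and using $\nu(\calF_{[0,t]})=\theta([0,t])$ then gives
\[
\P\pp{\M(t_i)\le z_i,\ 1\le i\le d}=\exp\pp{-\summ k1d z_k^{-\alpha}\pp{\theta([0,t_k])-\theta([0,t_{k-1}])}},\qquad t_0:=0,
\]
with $\theta([0,t_0])=\theta(\{0\})=0$. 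The same argument applied to \eqref{eq:def_ext_pro} gives $\P(\M_\alpha(s_i)\le z_i,\ 1\le i\le d)=\exp(-\summ k1d z_k^{-\alpha}(s_k-s_{k-1}))$ for $0<s_1<\cdots<s_d$. Taking $s_i=t_i^{\alpha H}$ and inserting $\theta([0,t_k])=\theta([0,1])\,t_k^{\alpha H}$, the two finite-dimensional families coincide once $\M$ is multiplied by the deterministic factor appearing in the statement: concretely, $\M(t)$ is $\alpha$-Fr\'echet of scale $\theta([0,1])\,t^{\alpha H}$ while $\M_\alpha(t^{\alpha H})$ is $\alpha$-Fr\'echet of scale $t^{\alpha H}$, so the exponents differ exactly by the factor $\theta([0,1])$. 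Finally, since both $\{\M(t)\}_{t\ge0}$ and $\{\M_\alpha(t^{\alpha H})\}_{t\ge0}$ are nondecreasing and right-continuous, equality of finite-dimensional distributions upgrades to equality in law, which is the assertion.

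I expect the main difficulty to lie in the rigor of the last step. One has to justify that $\calM([0,t])$ equals, almost surely, the supremum over those Poisson atoms whose random closed set meets the \emph{closed} interval $[0,t]$ --- equivalently, that passing from open sets to $[0,t]$ through the canonical extension of the sup-measure introduces no extra mass, which is a property of Choquet random sup-measures --- and that the subset of $(0,\infty)\times\calF(E)$ displayed above is measurable with the asserted $(\Leb\times\nu)$-measure. Granting this, the partition by first-hitting index together with the ordering of the $z_i$ is exactly what makes the exponent telescope into increments of $\theta([0,\cdot])$; the scale comparison, the $t\downarrow0$ continuity of the capacity, and the final change of variables are all routine.
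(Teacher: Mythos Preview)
Your argument is correct and reaches the same conclusion as the paper, but the route differs in an instructive way. The paper (Proposition~\ref{prop:extremal}) computes the finite-dimensional distributions by writing $\bigvee_k \ind_{[0,t_k]}/(\bigwedge_{j\ge k}x_j)$ as a sum of comonotonic indicators $\sum_k a_k\ind_{[0,t_k]}$ and then invoking the \emph{comonotonic additivity} of the tail dependence functional $\ell(\cdot)=\int(\cdot)\,d\theta$ for Choquet random sup-measures, citing \citep{molchanov16max,denneberg94nonadditive}; this handles all $(x_1,\dots,x_d)$ at once, without ordering them. You instead reduce to ordered $z_1\le\cdots\le z_d$ (legitimate since $\M$ is nondecreasing), partition $\calF(E)$ by the first-hitting index $k(F)$, and read off the exponent directly from the Poisson intensity as $\sum_k z_k^{-\alpha}\bigl(\theta([0,t_k])-\theta([0,t_{k-1}])\bigr)$. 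These two exponents are related by Abel summation and agree; your version is more elementary and self-contained, while the paper's packages the same telescoping inside the Choquet-integral machinery. Your use of $H>0$ to get $\theta(\{0\})=0$ via continuity of the capacity along $[0,1/n]\downarrow\{0\}$, and your derivation of $\theta([0,t])=\theta([0,1])\,t^{\alpha H}$ from \eqref{self-sim} and \eqref{eq:ecf0}, match the paper's reasoning exactly.
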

To the best of our knowledge, this fact has not been noticed in the literature before. 
This proposition actually follows from a more general result on Choquet random sup-measures and the corresponding extremal processes, which is of independent interest and established in Proposition \ref{prop:extremal} in the appendix.
In the upcoming setting, this provides another justification that it is important to work with random sup-measures in the presence of long-range dependence, as the corresponding extremal processes capture much less information of the dependence. See also the discussion in Section~\ref{sec:discussions}.

\section{Karlin random sup-measures}\label{sec:RSM}
In this section we provide two representations of Karlin random sup-measures. 
They are Choquet random sup-measures with $\alpha$-Fr\'echet marginals and they depend on a second parameter $\beta\in(0,1)$. 

Let us denote by $xA$, for $x>0$ and $A\subset\R$, the scaled set $\{xy: y\in A\}$. The Karlin random sup-measure $\calM_{\alpha,\beta}$ on $\R$ is defined by the following representation
\equh\label{eq:Karlin_rsm}
\calM_{\alpha,\beta}(A) := \sup_{\ell\in\N}\frac1{\Gamma_\ell^{1/\alpha}}\inddd{\wt\calN_\ell(x_\ell A)\ne0},\quad A\in\calG(\R),
\eque
where $\{(\Gamma_\ell,x_\ell,\wt\calN_\ell)\}_{\ell\in\N}$ is an enumeration of the points from a Poisson point process on $\R_+\times\R_+\times \mathfrak M_+(\R)$ with intensity measure $d\gamma\times \Gamma(1-\beta)^{-1}\beta x^{-\beta-1}dx \times d\wt\proba$. Here $\mathfrak M_+(\R)$ is the space of Radon point measures on $\R$ and $\wt\proba$ is the probability measure on it induced by a standard Poisson random measure (with intensity $dx$). 
Equivalently, the Poisson point process $\{(\Gamma_\ell,x_\ell,\wt\calN_\ell)\}_{\ell\in\N}$ can be viewed as the Poisson point process $\{(\Gamma_\ell,x_\ell)\}_{\ell\in\N}$  on $\R_+\times\R_+$ with intensity $d\gamma \times \Gamma(1-\beta)^{-1}\beta x^{-\beta-1}dx$ and i.i.d.~marks $\{\wt \calN_\ell\}_{\ell\in\N}$ with law $\wt\proba$.

To see that $\mab$ is a Choquet random sup-measure, we introduce the random closed set $\wt \calR_\ell$ induced by $\wt\calN_\ell$ as
\[
\wt\calR_\ell:=\ccbb{t\in\R:\wt\calN_\ell(\{t\}) = 1},
\]
and then write
$\sccbb{\wt \calN_\ell(x_\ell A)\ne 0} = \sccbb{(\wt \calR_\ell/x_\ell)\cap A\ne\emptyset}$.
So \eqref{eq:Karlin_rsm} now becomes
\[
\mab(A) = \sup_{\ell\in\N}\frac1{\Gamma_\ell^{1/\alpha}}\inddd{\pp{\wt\calR_\ell/x_\ell}\cap A\ne\emptyset}, \quad A\in\calG(\R),
\]
as in \eqref{eq:Choquet}, and then it can be extended to all $A\subset\R$ by the canonical extension of sup-measures. 

Viewing $\{\calM_{\alpha,\beta}(A)\}_{A\subset\R}$ as a set-indexed $\alpha$-Fr\'echet max-stable process, we have the following joint distribution:
\begin{multline}\label{eq:fdd}
\proba\pp{\mab(A_1)\le z_1,\dots,\mab(A_d)\le z_d} \\
= \exp\pp{-\Gamma(1-\beta)^{-1}\int_0^\infty \beta x^{-\beta-1}\wt\esp \pp{\bigvee_{i=1}^d\frac{\inddd{\wt\calN(xA_i)\ne0}}{z_i^\alpha }}dx},
\end{multline}
for all $d\in\N$, $z_1,\dots,z_d>0$, where  $\wt\esp$ is the expectation with respect to $\wt \proba$.
   See \citep{stoev06extremal,molchanov16max} for more details. 
It suffices to consider $A_1,\dots,A_d$ as open (or compact) intervals in $\R$ (not necessarily disjoint) above to determine the law of $\mab$. 

From the above presentation, in particular we compute for $d=1$ and a compact set $K\subset\R$, 
\[
\proba(\mab(K)\le z)  = \exp\pp{-\Gamma(1-\beta)^{-1}\int_0^\infty \beta x^{-\beta-1}\wt\proba\pp{\wt \calN(xK)\ne 0} dxz^{-\alpha}}.
\]
Let $\Leb$ denote the Lebesgue measure on $\R$. We have
\begin{align}
 \int_0^\infty \beta x^{-\beta-1}\wt\proba\pp{\wt \calN(xK)\ne 0} dx\label{eq:ecf}
& = \int_0^\infty \beta x^{-\beta-1}\pp{1-\exp(-x \Leb(K))} dx\\
& = \Leb(K)\int_0^\infty x^{-\beta}\exp(-x\Leb(K))dx\nonumber\\
& = \Gamma(1-\beta)\Leb(K)^\beta. \nonumber
\end{align}
Therefore we arrive at, for all $z>0$,
\[
\proba(\mab(K)\le z) = \exp\pp{-\frac{\theta_\beta(K)}{z^\alpha}} \qmwith \theta_\beta(K):= {\rm Leb}(K)^\beta.
\]
The function $\theta_\beta$ is the extremal coefficient functional of the random sup-measure $\mab$.

It is clear from the definition \eqref{eq:fdd} that $\mab$ is $\beta/\alpha$-self-similar in the sense 
of \eqref{self-sim}
and translation-invariant
\[
\mab(\cdot) \eqd \mab(x+\cdot), \mfa x\in\R.
\]
It is also remarkable that it is symmetric (or rearrangement invariant \citep[Sect.~9]{molchanov16max}) 
in the sense that its law only depends on the Lebesgue measures of the sets.
More precisely, for two collections of disjoint open intervals $\{A_1,\dots,A_d\}$ and $\{B_1,\dots,B_d\}$ such that $\Leb(A_i) = \Leb(B_i), i=1,\dots,d$, we have
\[
\pp{\mab(A_1),\dots,\mab(A_d)}\eqd \pp{\mab(B_1),\dots,\mab (B_d)}.
\]
This is a stronger notion than the translation invariance, which has been known to hold true for all random sup-measures arising from stationary sequences \citep{obrien90stationary}.

By self-similarity essentially all properties of $\mab$ can be investigated by restricting to a bounded interval, in which case $\mab$ has a more convenient representation. We consider its restriction to $[0,1]$ here. In this case, $\theta_\beta$ determines the law of a random closed set $\calR\topp\beta$ in $[0,1]$ by
\equh\label{eq:R}
\proba(\calR\topp\beta\cap K\ne\emptyset) = \frac{\theta_\beta(K)}{\theta_\beta([0,1])} = \Leb(K)^\beta, \mfa K\subset[0,1] \mbox{ compact}.
\eque

Now, restricting to $[0,1]$, it follows that 
\equh\label{eq:Karlin_rsm01} 
\calM_{\alpha,\beta} (\cdot)
 \eqd  \sup_{\ell\in\N}\frac1{\Gamma_\ell^{1/\alpha}}\inddd{\pp{\calR\topp\beta_\ell\cap\,\cdot\,}\ne\emptyset}\,\text{ on }[0,1],
\eque
where $\{\Gamma_\ell\}_{\ell\in\N}$ is the sequence of arrival times of a standard Poisson point process on $\R_+$, $\{\calR\topp\beta_\ell\}_{\ell\in\N}$ are i.i.d.~copies of $\calR\topp\beta$,  and the two sequences are independent. The fact that $\mab$ in \eqref{eq:Karlin_rsm} has the same presentation (in law) as in \eqref{eq:Karlin_rsm01} when restricted to $[0,1]$, follows from either a straightforward computation of finite-dimensional distributions of random sup-measures based on \eqref{eq:Karlin_rsm01}, or from a more general property of Choquet random sup-measures \citep[Corollary 4.5]{molchanov16max}.

In addition, we have the following probabilistic representation of $\calR\topp\beta$. 
\begin{Lem}\label{lem:R}
Suppose $\beta\in(0,1)$. Let $Q_\beta$ be an $\N$-valued random variable with probability mass function
\[
\proba(Q_\beta = k) = \frac{\beta(1-\beta)_{(k-1)\uparrow}}{k!}=:p_\beta(k),\quad k\in\N,
\]
with $(a)_{n\uparrow} = a(a+1)\cdots(a+n-1)$, $n\in\N$, $a\in\R$. 
Let $\indn U$ be i.i.d.~random variables uniformly distributed over $(0,1)$, independent from $Q_\beta$. Then,
\[
\calR\topp\beta \eqd \bigcup_{i=1}^{Q_\beta} \{U_i\}. 
\]
\end{Lem}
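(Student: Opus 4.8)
The plan is to identify the two random closed sets through their capacity functionals. By the uniqueness of the capacity functional recalled in the preliminary background --- the law of a random closed set $R$ in $\calF([0,1])$ is determined by the values $\proba(R\cap K\neq\emptyset)$, $K\in\calK([0,1])$ --- and since \eqref{eq:R} states that $\calR\topp\beta$ has capacity functional $K\mapsto\Leb(K)^\beta$, it suffices to check that the finite random set $R:=\bigcup_{i=1}^{Q_\beta}\{U_i\}$, which is almost surely a finite and hence closed subset of $[0,1]$, thus a bona fide random element of $\calF([0,1])$, satisfies $\proba(R\cap K\neq\emptyset)=\Leb(K)^\beta$ for every compact $K\subseteq[0,1]$.

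First I would condition on the value of $Q_\beta$. Since the $U_i$ are i.i.d.~uniform on $(0,1)$ and independent of $Q_\beta$, writing $t:=\Leb(K)$ we have $\proba(U_i\in K)=t$, and therefore
\[
\proba(R\cap K\neq\emptyset)=1-\proba\pp{U_i\notin K,\ i=1,\dots,Q_\beta}=1-\sum_{k=1}^\infty p_\beta(k)(1-t)^k=1-G(1-t),
\]
where $G(s):=\sum_{k\ge1}p_\beta(k)s^k$ is the probability generating function of $Q_\beta$.

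The crux is then the identity $G(s)=1-(1-s)^\beta$ for $s\in[0,1]$. I would obtain it from the generalized binomial series $\sum_{j\ge0}\frac{(1-\beta)_{j\uparrow}}{j!}u^j=(1-u)^{-(1-\beta)}$, valid for $|u|<1$: reindexing by $k=j+1$ (with the convention that the empty product $(1-\beta)_{0\uparrow}$ equals $1$) and integrating term by term over $[0,s]$, which is legitimate by uniform convergence on compact subsets of $(-1,1)$, gives
\[
G(s)=\beta\sum_{j\ge0}\frac{(1-\beta)_{j\uparrow}}{(j+1)!}s^{j+1}=\beta\int_0^s(1-u)^{\beta-1}\,du=1-(1-s)^\beta;
\]
continuity at $s=1$ covers the endpoint and, in passing, confirms $G(1)=1$, i.e.~that $p_\beta$ is a genuine probability mass function (so that $Q_\beta$ is a well-defined $\N$-valued random variable, namely one with the Sibuya distribution of parameter $\beta$). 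Substituting $s=1-t$ then yields $\proba(R\cap K\neq\emptyset)=1-\big(1-t^\beta\big)=t^\beta=\Leb(K)^\beta$, which is precisely the capacity functional in \eqref{eq:R}, and the lemma follows.

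I expect the only step requiring genuine care to be this power-series identity for $G$: one has to recognize the Pochhammer coefficients $\beta(1-\beta)_{(k-1)\uparrow}/k!$ as the Taylor coefficients of an antiderivative of $\beta(1-u)^{\beta-1}$ and justify the interchange of summation and integration; the remainder is an elementary conditioning computation combined with the cited uniqueness of the capacity functional. One could alternatively avoid the explicit series manipulation by directly verifying that $1-(1-s)^\beta$ has the claimed Taylor expansion, but some form of the same computation seems unavoidable.
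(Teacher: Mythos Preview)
Your proof is correct and follows essentially the same route as the paper's: both reduce to matching the capacity functional \eqref{eq:R} via conditioning on $Q_\beta$, leading to the identity $\esp[(1-z)^{Q_\beta}]=1-z^\beta$. The only difference is that the paper cites this generating-function identity from \citep[Eq.~(3.42)]{pitman06combinatorial}, whereas you derive it directly from the binomial series and termwise integration.
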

\begin{proof}
It suffices to prove that $\bigcup_{i=1}^{Q_\beta}\{U_i\}$ has the same capacity functional as $\calR\topp\beta$ in \eqref{eq:R}. We have, by first conditioning on $Q_\beta$,
\[
\proba\pp{\pp{\bigcup_{i=1}^{Q_\beta}\{U_i\}}\cap K\ne\emptyset} = \esp\bb{1-(1-\Leb(K))^{Q_\beta}}.
\]
One can show that the prescribed distribution of $Q_\beta$ satisfies the property, for all $z\in(0,1)$, 
\[
1-z^\beta = \esp\bb{(1-z)^{Q_\beta}}.
\]
See for example \citep[Eq.~(3.42)]{pitman06combinatorial}. In view of \eqref{eq:R}, this completes the proof.
\end{proof}

\begin{Rem}
The law of $Q_\beta$ has been known to be related to the Karlin model defined in Section~\ref{sec:Karlin}, and hence it is not a coincidence that it shows up in the limit random sup-measure. In fact, $Q_\beta$ is a {\em size-biased sampling} from the asymptotic frequency $\{p_\beta(k)\}_{k\in\N}$ 
of blocks of size $k$ of an infinite exchangeable random partition with $\beta$-diversity.
See \citep[Section 3.3]{pitman06combinatorial} for more details and Remark \ref{rem:pa} below. 
\end{Rem}

\begin{Rem}
The first representation of $\calM_{\alpha,\beta}$ has been already considered by \citet{molchanov16max}. Their description starts with and focuses on the extremal coefficient functional $\theta_\beta$ whereas we start from the underlying Poisson point process directly. This is suggested in \citep[Remark 9.8]{molchanov16max}, while more detailed discussions can be found in the first arXiv online version of the same paper. In particular, Example 9.5 therein provides the same representation as in \eqref{eq:Karlin_rsm}. The interpretation of the set $\calR\topp\beta$ in our Lemma~\ref{lem:R} seems to be new. 
\end{Rem}

The Karlin random sup-measures also interpolate between the independently scattered random sup-measures $\calM_\alpha$
and the completely dependent one, defined as $\calM_\alpha^{\rm c}(\cdot) = Z\inddd{\,\cdot\,\ne\emptyset}$ for a standard $\alpha$-Fr\'echet random variable $Z$ (the random sup-measure taking the same value $Z$ on any non-empty set).

\begin{Prop}For every $\alpha>0$,  $\calM_{\alpha,\beta} \weakto\calM_\alpha$ as $\beta\uparrow 1$, and  $\calM_{\alpha,\beta} \weakto \calM_\alpha^{\rm c}$ as $\beta\downarrow 0$.
\end{Prop}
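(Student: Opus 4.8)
The plan is to establish both limits through the finite-dimensional distributions \eqref{eq:fdd}, since weak convergence in $\sm(\R)$ follows from convergence of the joint laws on bounded open intervals (by the criterion cited after the definition of the sup-vague topology, noting that the limits $\calM_\alpha$ and $\calM_\alpha^{\rm c}$ are a.s.\ continuous on intervals whose endpoints carry no atom, which holds here). So it suffices to show, for every fixed collection of bounded open intervals $A_1,\dots,A_d$ and every $z_1,\dots,z_d>0$, that the exponent
\[
J_\beta := \Gamma(1-\beta)^{-1}\int_0^\infty \beta x^{-\beta-1}\,\wt\esp\pp{\bigvee_{i=1}^d\frac{\inddd{\wt\calN(xA_i)\ne0}}{z_i^\alpha}}\,dx
\]
converges to the corresponding exponent for $\calM_\alpha$ as $\beta\uparrow1$, and to that for $\calM_\alpha^{\rm c}$ as $\beta\downarrow0$.

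\textbf{The case $\beta\uparrow1$.} Here the target is $\calM_\alpha$, whose exponent is $\sum_{i}\Leb(A_i\setminus\bigcup_{j<i}A_j)z_i^{-\alpha}$ in the disjoint decomposition; by the symmetry/rearrangement-invariance of $\mab$ it is in fact enough to treat disjoint intervals, in which case I want $J_\beta\to\sum_i \Leb(A_i)z_i^{-\alpha}$. The key observation is that $\wt\calN(xA_i)\ne0$ for \emph{some} $i$ with very small probability when $x$ is small (order $x\Leb(\bigcup A_i)$), so the dominant contribution to the $x$-integral comes from small $x$; there, conditionally on exactly one point of $\wt\calN$ falling in $\bigcup_i x A_i$, it lands in $xA_i$ with probability $\Leb(A_i)/\Leb(\bigcup_j A_j)$, which is exactly the mechanism producing independent scattering. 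Concretely I would split the integral at $x=1$: on $(1,\infty)$ the integrand is bounded by $x^{-\beta-1}\max_i z_i^{-\alpha}$ and contributes $O(1)$ uniformly, which after multiplication by $\Gamma(1-\beta)^{-1}\to0$ vanishes; on $(0,1)$ I expand $\wt\esp[\bigvee_i \inddd{\wt\calN(xA_i)\ne0}z_i^{-\alpha}]$ and show that, as $x\to0$, it equals $x\sum_i \Leb(A_i)z_i^{-\alpha} + O(x^2)$ (for disjoint intervals the leading term is exact up to the single-point contribution; the $O(x^2)$ absorbs two-or-more-points events), and then $\Gamma(1-\beta)^{-1}\int_0^1 \beta x^{-\beta-1}(x c + O(x^2))\,dx = \Gamma(1-\beta)^{-1}\beta(\frac{c}{1-\beta}+O(1)) = \frac{\beta}{\Gamma(2-\beta)}c + o(1)\to c$, using $\Gamma(2-\beta)\to1$. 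Dominated convergence, with dominating bound $x^{-\beta-1}\wedge(x \cdot \mathrm{const})$ integrable uniformly in $\beta$ bounded away from the endpoints, makes this rigorous.

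\textbf{The case $\beta\downarrow0$.} Now the target exponent is $\bigl(\bigvee_{i:A_i\ne\emptyset}z_i^{-\alpha}\bigr)\cdot\mathbf 1_{\{\text{some }A_i\ne\emptyset\}}$, since $\calM_\alpha^{\rm c}(A)=Z$ for every nonempty $A$. The heuristic is the opposite: as $\beta\downarrow0$ the mass $\Gamma(1-\beta)^{-1}\beta x^{-\beta-1}dx$ concentrates at $x=\infty$, and for large $x$ one has $\wt\calN(xA_i)\ne0$ simultaneously for all $i$ with $A_i$ nonempty (each $xA_i$ eventually contains a point with probability $\to1$), so the $\bigvee_i$ collapses to $\bigvee_i z_i^{-\alpha}$. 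Formally I would use $\theta_\beta(K)=\Leb(K)^\beta\to1$ for $K$ of positive length together with the rearrangement invariance to reduce to this, or argue directly: write $\wt\esp[\bigvee_i \inddd{\wt\calN(xA_i)\ne0}z_i^{-\alpha}] = \bigvee_i z_i^{-\alpha} - \wt\esp[\text{(defect from intervals missing a point)}]$, bound the defect by $\sum_i z_i^{-\alpha}\wt\proba(\wt\calN(xA_i)=0) = \sum_i z_i^{-\alpha}e^{-x\Leb(A_i)}$, and note $\Gamma(1-\beta)^{-1}\int_0^\infty \beta x^{-\beta-1}e^{-x a}\,dx = \Gamma(1-\beta)^{-1}\beta\,\Gamma(-\beta)a^\beta = -\Gamma(1-\beta)^{-1}\Gamma(1-\beta)a^\beta = \dots$ — more carefully, $\int_0^\infty \beta x^{-\beta-1}(1-e^{-xa})\,dx=\Gamma(1-\beta)a^\beta$ from \eqref{eq:ecf}, so the defect integral is $\Gamma(1-\beta)^{-1}\sum_i z_i^{-\alpha}(\text{const}-\Gamma(1-\beta)\Leb(A_i)^\beta)$ type expression; the point is $\Leb(A_i)^\beta\to1$, so the defect $\to0$. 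Meanwhile $\Gamma(1-\beta)^{-1}\int_0^\infty\beta x^{-\beta-1}(\bigvee_i z_i^{-\alpha})\inddd{\wt\calN(x\cup A_i)\ne0}dx=(\bigvee_i z_i^{-\alpha})\Leb(\cup A_i)^\beta\to\bigvee_i z_i^{-\alpha}$ by the same computation \eqref{eq:ecf}. Combining gives $J_\beta\to\bigvee_i z_i^{-\alpha}$.

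\textbf{Main obstacle.} The delicate point in both regimes is the interchange of limit and integral uniformly in $\beta$: the density $\beta x^{-\beta-1}$ has a $\beta$-dependent singularity at $0$ and heavy tail at $\infty$, and the factor $\Gamma(1-\beta)^{-1}$ blows up ($\beta\uparrow1$) or the integral itself would diverge without the $1-e^{-x\cdot}$-type cancellation ($\beta\downarrow0$). I expect the cleanest route is to avoid ad hoc dominated-convergence arguments by instead writing every relevant exponent in closed form via the identity $\int_0^\infty \beta x^{-\beta-1}(1-e^{-xa})\,dx=\Gamma(1-\beta)a^\beta$ already used in \eqref{eq:ecf} and its immediate generalization to $\wt\esp[\bigvee_i c_i\inddd{\wt\calN(xA_i)\ne0}]$ — which, by inclusion–exclusion over the $\bigvee$, is a finite alternating sum of terms $1-e^{-x\Leb(\cup_{j\in S}A_j)}$ — so that $J_\beta$ becomes an explicit finite combination of terms $\Leb(\cup_{j\in S}A_j)^\beta$ with coefficients built from the $z_i$, and then both limits $\beta\to1$ and $\beta\to0$ are read off by letting $\Leb(\cdot)^\beta\to\Leb(\cdot)$ resp.\ $\to1$. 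This reduces the whole proposition to elementary algebra plus the two one-line limits of the exponent, and sidesteps the integrability subtleties entirely.
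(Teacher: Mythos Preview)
Your proposal is correct, and the final ``cleanest route'' you identify is exactly the right argument: ordering $c_i:=z_i^{-\alpha}$ decreasingly and writing
\[
\bigvee_{i=1}^d c_i\,\inddd{\wt\calN(xA_i)\ne0}
=\sum_{k=1}^d (c_k-c_{k+1})\,\inddd{\wt\calN\!\left(x\bigcup_{j\le k}A_j\right)\ne0},
\]
one applies \eqref{eq:ecf} termwise to get $J_\beta=\sum_k(c_k-c_{k+1})\,\Leb\!\left(\bigcup_{j\le k}A_j\right)^\beta$, and both limits $\beta\to1$, $\beta\to0$ are then one line each. (Your earlier heuristic for $\beta\downarrow0$ stumbles briefly when you try to integrate $e^{-xa}$ against $\beta x^{-\beta-1}$, which diverges at $0$; you correctly abandon that and your sandwich $c_1\Leb(A_1)^\beta\le J_\beta\le c_1\Leb(\cup_iA_i)^\beta$ already suffices.)

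The paper's proof is different and much shorter: it works with the representation \eqref{eq:Karlin_rsm01} on $[0,1]$ and simply observes that the capacity functional $\proba(\calR^{(\beta)}\cap K\ne\emptyset)=\Leb(K)^\beta$ converges, as $\beta\uparrow1$, to $\Leb(K)$ (the capacity functional of a uniform singleton) and, as $\beta\downarrow0$, to $1$ (that of the deterministic set $[0,1]$), whence $\calR^{(\beta)}$ converges weakly in $\calF([0,1])$ in each regime; convergence of the Choquet random sup-measure then follows. Your approach is more computational but fully self-contained at the level of finite-dimensional distributions, avoiding the (implicit) continuity step from convergence of the random closed set to convergence of the random sup-measure; the paper's approach is more conceptual and exploits the structure of Choquet random sup-measures directly. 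Both ultimately rest on the same elementary fact that $a^\beta\to a$ or $a^\beta\to1$.
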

\begin{proof}
It suffices to notice that by the capacity functional in \eqref{eq:R},
$\calR\topp\beta\weakto U$ as $\beta\uparrow1$ where $U$ is the random closed set induced by the uniform random variable on $(0,1)$, and $\calR\topp\beta\weakto[0,1]$, a deterministic set, as $\beta\downarrow 0$. 
\end{proof}

We conclude this section by examining the ergodic properties of $\mab$.
Every self-similar and translation invariant random sup-measure $\calM$ naturally induces a stationary process, the so-called {\em max-increment process} defined as
\equh\label{eq:max-increment}
\zeta(t):=\calM((t-1,t]), \quad t\in\R.
\eque
\begin{Prop}
The max-increment process $\{\zeta_{\alpha,\beta}(t)\}_{t\in\R}$  of $\mab$ is not ergodic.
\end{Prop}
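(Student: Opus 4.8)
The plan is to exhibit a nontrivial invariant event for the max-increment process $\{\zeta_{\alpha,\beta}(t)\}_{t\in\R}$, which by definition equals $\{\calM_{\alpha,\beta}((t-1,t])\}_{t\in\R}$. The natural candidate comes from the fact that each underlying random closed set $\calR^{(\beta)}_\ell$ is \emph{bounded} — indeed, by Lemma~\ref{lem:R} (extended from $[0,1]$ to $\R$ via self-similarity, or directly via the representation \eqref{eq:Karlin_rsm}), on any fixed bounded interval each $\calR^{(\beta)}_\ell$ is a finite set of uniform points, and globally on $\R$ it is the scaled set $\wt\calR_\ell/x_\ell$, which is a.s.\ a locally finite point set. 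In sharp contrast to the stable-regenerative case, where the regenerative sets are unbounded, here the ``long-range cluster'' contributing the global maximum of $\calM_{\alpha,\beta}$ over all of $\R$ lives on a bounded region. This boundedness is the structural feature that destroys ergodicity.

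First I would make precise the global-maximum heuristic. Consider the random variable $M^* := \sup_\ell \Gamma_\ell^{-1/\alpha} = \Gamma_1^{-1/\alpha}$, the overall height of the tallest Poisson mark; this is a.s.\ finite and positive. The point $(\Gamma_1,x_1,\wt\calN_1)$ achieving it carries a random closed set $\calR^{(\beta)}_1/x_1$ (in the notation of \eqref{eq:Karlin_rsm}) which, conditionally on everything, is a.s.\ nonempty and bounded. Let $T^* := \sup(\calR^{(\beta)}_1/x_1)$, a well-defined finite random variable. The key observation is: for the max-increment process, $\sup_{t\le s}\zeta_{\alpha,\beta}(t)$ stabilizes at the value $M^*$ once $s$ exceeds $T^*$ (because $(T^*-1,T^*]$ or a later window captures the top cluster), whereas for $s$ very negative it is strictly smaller — more carefully, one argues that $\limsup_{s\to-\infty}\sup_{t\le s}\zeta_{\alpha,\beta}(t) < M^*$ a.s., since the finitely many Poisson marks whose clusters reach far to the left have bounded heights below $M^*$ and only finitely many clusters intersect any ray $(-\infty,s]$ with height exceeding a given level, by local finiteness of the Poisson point process on $\R_+\times\R_+\times\mathfrak M_+(\R)$ combined with the intensity of the $x$-component. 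Thus the event $E := \{\lim_{s\to\infty}\sup_{t\le s}\zeta_{\alpha,\beta}(t) > \limsup_{s\to-\infty}\sup_{t\le s}\zeta_{\alpha,\beta}(t)\}$ — or some variant phrased purely through running maxima of $\zeta_{\alpha,\beta}$ — has probability one, yet the analogous statement is \emph{false} for the independently scattered $\calM_\alpha$, where the top point sits at a single uniform location and running maxima from $-\infty$ already see unboundedly tall points. Actually, the cleaner route is to directly produce an invariant random variable: define $\xi := \lim_{s\to\infty}\sup_{|t|\le s}\zeta_{\alpha,\beta}(t) = \sup_{t\in\R}\zeta_{\alpha,\beta}(t)$, the global max-increment, which by translation invariance is shift-invariant, and show it is \emph{not} a.s.\ constant. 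Since any ergodic stationary process has all shift-invariant random variables a.s.\ constant, this proves non-ergodicity.

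So the argument reduces to: (i) $\xi := \sup_{t\in\R}\calM_{\alpha,\beta}((t-1,t])$ is a.s.\ finite — this follows because on $\R$ the random sup-measure $\calM_{\alpha,\beta}$ restricted to any compact set is a.s.\ finite and, more to the point, because only finitely many clusters $\calR^{(\beta)}_\ell/x_\ell$ can reach height above any $\varepsilon>0$ on a window of unit length (the expected number is controlled by the intensity $\int \beta x^{-\beta-1}\wt\esp[\indd{\wt\calN(xB)\ne 0}]\,dx\,\cdot\,\varepsilon^{-\alpha}\cdot(\text{spatial extent})$, which is finite for bounded spatial range by \eqref{eq:ecf}), and as the window slides over $\R$ the set of clusters of height $>\varepsilon$ that ever fit inside \emph{some} unit window is a.s.\ countable with heights decaying — wait, here one must be slightly careful since there are infinitely many windows; the correct statement uses that for each $\varepsilon$, a.s.\ only finitely many $\ell$ have $\Gamma_\ell^{-1/\alpha}>\varepsilon$ \emph{and} $\calR^{(\beta)}_\ell$ of diameter $\le 1$ isn't automatic — instead use self-similarity to reduce to $[0,1]$ and bound $\calM_{\alpha,\beta}([0,1])<\infty$; (ii) $\xi$ is shift-invariant, immediate from translation invariance of $\calM_{\alpha,\beta}$; (iii) $\mathrm{Var}(\xi)>0$, equivalently $\xi$ is non-degenerate. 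For (iii), the heuristic is that $\xi$ is at least as large as $\Gamma_1^{-1/\alpha}$ (the top cluster is bounded, so it fits inside some unit window, hence $\xi \ge \Gamma_1^{-1/\alpha}$) but also $\xi \le \sup_\ell \Gamma_\ell^{-1/\alpha}\indd{\mathrm{diam}(\calR^{(\beta)}_\ell/x_\ell)\le 1}$ is bounded above by something strictly less than $\sup_\ell\Gamma_\ell^{-1/\alpha}$ in general — in fact the simplest argument: $\xi \ge \Gamma_1^{-1/\alpha}$ because $\calR^{(\beta)}_1/x_1$ is a.s.\ a \emph{bounded nonempty} set hence contained in $(t-1,t]$ for suitable $t$ (if its diameter exceeds $1$ this needs refinement — but conditionally on $Q_\beta=1$, which has positive probability $p_\beta(1)=\beta$, the set is a single point and the containment is automatic; so with positive probability $\xi=\Gamma_1^{-1/\alpha}$, and intersecting with the independent event that $\Gamma_1^{-1/\alpha}$ lies in two disjoint Borel sets of positive probability gives non-degeneracy of $\xi$). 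I would polish (iii) by conditioning on the top mark having $Q_\beta = 1$ (probability $\beta$, independent of $\Gamma_1$), on which $\xi = \Gamma_1^{-1/\alpha}$ exactly, and $\Gamma_1^{-1/\alpha}$ is a genuinely non-degenerate ($\alpha$-Fréchet-type) random variable; hence $\xi$ cannot be a.s.\ constant, so $\{\zeta_{\alpha,\beta}(t)\}$ is not ergodic.

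The main obstacle I anticipate is step (i), the a.s.\ finiteness and more precisely the \emph{measurability and well-definedness} of $\xi = \sup_{t\in\R}\zeta_{\alpha,\beta}(t)$ as a supremum over an uncountable index set combined with controlling contributions from the infinitely many Poisson marks whose clusters have unbounded diameter or drift off to $\pm\infty$: one must rule out that, although each individual cluster is bounded, the \emph{sliding} unit window sees an ever-growing collection of ever-shorter-but-still-positive clusters summing up to an infinite running maximum. The clean fix is to invoke self-similarity (Section~\ref{sec:RSM}): $\xi$ has the same law as $\sup_{t\in[0,1]}\calM_{\alpha,\beta}((t-1,t])$ up to the scaling $(1/2)^{\beta/\alpha}$ — no, translation invariance plus self-similarity together let one reduce the global sup to a sup over a bounded region where $\calM_{\alpha,\beta}$ is a.s.\ finite by its $\alpha$-Fréchet marginals \eqref{eq:ecf}, and then finiteness on $\R$ follows by a standard Borel–Cantelli argument over dyadic windows; alternatively one avoids $\xi$ entirely and works with the invariant event $E$ above using running maxima toward $\pm\infty$, where the asymmetry between $+\infty$ (stabilizes at the global top once past the bounded top cluster) and $-\infty$ (by Borel–Cantelli only finitely many marks of height $>\varepsilon$ have clusters meeting $(-\infty,s]$, using the finite intensity from \eqref{eq:ecf}) is the crux. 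Either way, the heart of the matter is purely the boundedness of the random closed sets $\calR^{(\beta)}_\ell$, which is exactly the structural distinction of the Karlin random sup-measures from the stable-regenerative ones.
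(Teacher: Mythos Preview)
Your argument rests on a misreading of the representation \eqref{eq:Karlin_rsm} of $\calM_{\alpha,\beta}$ on $\R$. There each mark $\wt\calN_\ell$ is a unit-rate Poisson process on all of $\R$, so the random closed set $\wt\calR_\ell/x_\ell$ is almost surely an \emph{unbounded}, locally finite, infinite subset of $\R$. The finite-set description of Lemma~\ref{lem:R} pertains only to the restriction to $[0,1]$ in \eqref{eq:Karlin_rsm01}; those $\calR_\ell^{(\beta)}$ are not the clusters appearing in the full-line representation. This breaks your argument at two points. First, there is no ``top mark'' on $\R$: the intensity $d\gamma \times \Gamma(1-\beta)^{-1}\beta x^{-\beta-1}dx$ places infinitely many Poisson points in every strip $\{0<\gamma<\epsilon\}$ (since $\int_0^\infty x^{-\beta-1}dx=\infty$), so $\inf_\ell\Gamma_\ell=0$ and your $\Gamma_1^{-1/\alpha}$ does not exist. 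Second, and decisively, your candidate invariant $\xi=\sup_{t\in\R}\zeta_{\alpha,\beta}(t)$ equals $+\infty$ almost surely: every cluster $\wt\calR_\ell/x_\ell$ is nonempty and therefore meets some unit window $(t-1,t]$, giving $\xi\ge\Gamma_\ell^{-1/\alpha}$ for every $\ell$, and the right side is unbounded. A degenerate $\xi$ cannot witness non-ergodicity; neither self-similarity nor a Borel--Cantelli argument over dyadic windows rescues finiteness, since $\proba(\zeta_{\alpha,\beta}(n)>c)$ is the same positive number for every $n\in\Z$. The conditioning on $Q_\beta=1$ likewise lives only in the $[0,1]$ picture and does not transfer. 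If anything, it is precisely the \emph{unboundedness} of the Karlin clusters --- each one hits arbitrarily remote unit windows --- that produces the persistent dependence behind non-ergodicity.

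The paper takes an entirely different and much shorter route: it applies the Kabluchko--Schlather criterion for ergodicity of stationary max-stable processes, computing
\[
\tau_z(t)=\log\proba\bigl(\zeta_{\alpha,\beta}(0)\le z,\ \zeta_{\alpha,\beta}(t)\le z\bigr)-2\log\proba\bigl(\zeta_{\alpha,\beta}(0)\le z\bigr)
\]
directly from \eqref{eq:fdd}. For $t>1$ the two unit intervals are disjoint, and using independence of $\wt\calN$ over disjoint sets together with \eqref{eq:ecf} one obtains $\tau_z(t)=2^\beta z^{-\alpha}$, a positive constant in $t$. Hence $T^{-1}\int_0^T\tau_z(t)\,dt\not\to 0$ and the process is not ergodic.
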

\begin{proof}
Introduce, for $z>0$, $t\in\R$,
\[
\tau_z(t):=\log\proba(\zeta_{\alpha,\beta}(0)\le z, \zeta_{\alpha,\beta}(t)\le z) - 2\log\proba(\zeta_{\alpha,\beta}(0)\le z).
\]
A simple necessary and sufficient condition for ergodicity of a stationary $\alpha$-Fr\'echet process is that 
\[
\lim_{T\to\infty}\frac1T\int_0^T\tau_z(t)dt = 0 \mfa z>0,
\]
see \citet{kabluchko10ergodic}.
Here we have, for $t>1$, 
\begin{align*}
-\log &\proba(\zeta_{\alpha,\beta}(0)\le z,\zeta_{\alpha,\beta}(t)\le z)\\
 & = \frac1{z^\alpha}\Gamma(1-\beta)^{-1}\int_0^\infty \beta x^{-\beta-1}\wt \proba\pp{\wt \calN(x(-1,0])\ne 0, \wt\calN(x(t-1,t])\ne 0}dx
\\
& = \frac1{z^\alpha}\Gamma(1-\beta)^{-1}\int_0^\infty \beta x^{-\beta-1}(1-e^{-x})^2dx  = (2-2^\beta)z^{-\alpha}.
\end{align*}
In addition to \eqref{eq:ecf}, this implies for all $t>1$, $z>0$,
\begin{align*}
\tau_z(t) & = \bb{2\theta_\beta((-1,0]) - (2-2^\beta)}z^{-\alpha}
  = 2^\beta z^{-\alpha}.
\end{align*}
The desired result hence follows.
\end{proof}

\section{A heavy-tailed Karlin model}\label{sec:Karlin}
In this section, we introduce a discrete stationary process $\indn X$ based on a model, originally studied by \citet{karlin67central}, which is essentially an infinite urn scheme.  Here, we shall work with a heavy-tailed randomized version of the original model.

To start with, consider an $\N$-valued random variable $Y$ with $\proba(Y = k) = p_k$, $k\in\N$. We assume that $p_1\ge p_2\ge\cdots >0$ and, for technical purpose, encode them into the measure
\begin{equation}\label{nu}
\nu := \sif\ell1\delta_{1/p_\ell},
\end{equation}
where $\delta_x$ is the unit point mass at $x$. The following regular variation assumption is made on the frequencies: 
\equh\label{eq:RVp}
\nu((0,x]) = \max\{\ell\in\N: 1/p_\ell\le x\} = x^\beta L(x) \quad \text{ with } \beta\in(0,1),
\eque
for some slowly varying function $L$ at infinity. 

The randomized Karlin model $\indn X$ is defined through a two-layer construction. We imagine that there are infinitely many empty boxes indexed by $\N$.
First, we independently associate a heavy-tailed random variable to each box. Second, 
at each round $n$, we throw a ball at random in one of the boxes (according to 
the law of $Y$) and we consider the corresponding heavy-tailed random variable as the value of our process at time $n$. Namely,
let $\{\varepsilon_\ell\}_{\ell\in\N}$ be i.i.d.~random variables with common law such that
\equh\label{eq:RVepsilon}
\proba(\varepsilon_1>y) \sim c_\alpha y^{-\alpha} \mmas y\to\infty \quad \text{ with }  \alpha>0,\, c_\alpha\in(0,\infty),
\eque
each associated with the box with label $\ell\in\N$.
Let $\indn Y$ be i.i.d.~random variables with common law as $Y$ described above, independent of $\{\varepsilon_\ell\}_{\ell\in\N}$.
The stationary sequence $\indn X$ is then obtained by setting
\[
X_n:= \varepsilon_{Y_n}, n\in\N.
\]

Here, we are interested in the empirical random sup-measure of $\indn X$ on $[0,1]$ introduced as
\[
M_n(\cdot) := \max_{i/n\in\,\cdot\,} X_i,
\]
and its limit as $n\to\infty$. 
Important quantities relying on the infinite urn scheme are, 
\[
K_{n,\ell}:=\summ i1n \inddd{Y_i = \ell},\;\ell\ge 1, \qmand K_n := \sif \ell1\inddd{K_{n,\ell}\ne\emptyset},
\]
the number of balls in the box $\ell$ and the number of non-empty boxes at time $n$, respectively.
We know from \citep{karlin67central} that, under \eqref{eq:RVp}, $K_n\sim \Gamma(1-\beta) n^\beta L(n)$ almost surely.

For a more detailed description of the model, we shall work within the framework of point-process convergence generalizing \eqref{eq:independent}.  For each $n\in\N$, introduce, for $\ell\ge 1$,
\[
R_{n,\ell}=\{i\in\{1,\ldots,n\} : Y_i=\ell\}.
\]
The following point process $\xi_n$ on $\R_+\times \calF([0,1])$ encode the information of our random model at time $n$:
\begin{equation}\label{eq:xi_n}
 \xi_n
 :=\sum_{\ell\ge 1,\, K_{n,\ell}\ne0 }\delta_{\pp{\varepsilon_{\ell}/b_n,R_{n,\ell}/n}},
\end{equation}
The first coordinate in the Dirac masses presents the value (normalized by $b_n$, given below) attached to the box $\ell$ and the second coordinate the possible multiple positions among $\{1,\dots,n\}$ (standardized by $1/n$) where this box has been chosen.

Our main results are the following. The first is a complete point-process convergence.
\begin{Thm}
\label{thm:1}
For the model above under assumptions \eqref{eq:RVp} and \eqref{eq:RVepsilon}, with
\equh\label{eq:bn}
b_n := (c_\alpha\Gamma(1-\beta)n^\beta L(n))^{1/\alpha},
\eque
we have
\[
\xi_n\weakto \xi:=\sif\ell1 \delta_{\pp{\Gamma_\ell^{-1/\alpha},\calR_\ell\topp\beta}},\mmas n\to\infty,
\]
in $\mathfrak M_+((0,\infty)\times\calF([0,1]))$, 
where $\{(\Gamma_\ell,\calR_\ell\topp\beta)\}_{\ell\in\N}$ have the same law as in \eqref{eq:Karlin_rsm01}. 
\end{Thm}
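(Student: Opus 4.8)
The plan is to establish the point-process convergence $\xi_n \weakto \xi$ in $\mathfrak M_+((0,\infty)\times\calF([0,1]))$ by the standard Laplace-functional / Kallenberg approach, but with the added technical wrinkle that the mark space $\calF([0,1])$ is infinite-dimensional and the point-process positions themselves are random closed sets. Since the limit $\xi$ is a Poisson point process, by Kallenberg's theorem it suffices to verify (i) convergence of intensity measures on a convergence-determining class of sets, and (ii) the vanishing of the probability of more than one point of $\xi_n$ landing in each such set (the avoidance/asymptotic-independence condition), or alternatively to compute directly $\esp[\exp(-\xi_n(f))] \to \esp[\exp(-\xi(f))]$ for $f$ in a suitable class of test functions, e.g. $f(y,F) = \phi(y)\,g(F)$ with $\phi$ bounded continuous supported away from $0$ and $g$ a finite product of indicators of sets of the form $\calF_{G}$, $\calF^{K}$ generating the Fell topology. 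Because the Fell topology on $\calF([0,1])$ is Polish and the maps $F\mapsto \indd{F\cap A\ne\emptyset}$ for $A$ a finite union of open intervals are convergence-determining (by the criterion recalled in the Preliminary background via \citep[Corollary 1.6.9]{molchanov05theory}), one reduces the whole statement to controlling, for fixed $d$, test intervals $A_1,\dots,A_d\subset[0,1]$ and thresholds $y>0$, the quantities
\[
\proba\pp{\#\ccbb{\ell: K_{n,\ell}\ne 0,\ \varepsilon_\ell/b_n>y,\ R_{n,\ell}/n\cap A_j\ne\emptyset\ \forall j\in J} = k}
\]
for $J\subseteq\{1,\dots,d\}$, and matching their limits with the Poisson intensity built from $\theta_\beta$.

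\textbf{Key steps, in order.} First I would condition on the urn-occupancy configuration $(K_{n,\ell})_{\ell\ge 1}$ and on the placement of balls, so that $R_{n,\ell}/n$ becomes deterministic and the $\varepsilon_\ell$ remain i.i.d.\ heavy-tailed; then $\xi_n$ is, conditionally, a superposition of independent single-point contributions indexed by non-empty boxes, and the classical computation for i.i.d.\ heavy-tailed triangular arrays (cf.\ the argument behind \eqref{eq:independent}) applies: the normalization $b_n$ in \eqref{eq:bn} is exactly tuned so that $\sum_{\ell: K_{n,\ell}\ne 0}\proba(\varepsilon_\ell/b_n>y\mid \cdot) \approx K_n c_\alpha (b_n y)^{-\alpha}\to y^{-\alpha}$ using Karlin's law of large numbers $K_n\sim\Gamma(1-\beta)n^\beta L(n)$. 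Second — and this is the heart of the matter — I would show that the empirical distribution of the random sets $\{R_{n,\ell}/n : K_{n,\ell}\ne 0\}$, weighted appropriately, converges so that a box chosen ``size-biased by the chance of producing a large value'' has its footprint $R_{n,\ell}/n$ converging in $\calF([0,1])$ to $\calR\topp\beta$. Concretely, for a fixed test set $A$ (a finite union of intervals), I must prove
\[
\frac{1}{K_n}\#\ccbb{\ell: K_{n,\ell}\ne 0,\ R_{n,\ell}/n\cap A\ne\emptyset} \Pto \Leb(A)^\beta,
\]
and more generally the analogous joint statement for several $A_j$'s, which identifies the capacity functional of the limiting mark with \eqref{eq:R}; this is where the combinatorics of the Karlin infinite urn scheme enters and where Lemma~\ref{lem:R} (the $Q_\beta$-representation, i.e. the asymptotic block-size frequencies $p_\beta(k)$ and size-biasing thereof, via \citep[Section 3.3]{pitman06combinatorial}) does the bookkeeping: a box with $K_{n,\ell}=k$ balls contributes a set of $k$ points whose positions are asymptotically like $k$ i.i.d.\ uniforms on $[0,1]$, and the number of boxes of size $k$ among occupied boxes has asymptotic frequency $p_\beta(k)$ after the right normalization. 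Third, I would combine the two: a Poissonization/independence argument (or a direct second-moment / covariance computation) shows that, jointly, the largest values $\Gamma_\ell^{-1/\alpha}$ and their footprints $\calR\topp\beta_\ell$ decouple into i.i.d.\ marks attached to the Poisson points $\Gamma_\ell$, yielding exactly $\xi$ as in \eqref{eq:Karlin_rsm01}. Fourth, I would discharge tightness/properness: verify that $\xi_n$ restricted to $[\delta,\infty)\times\calF([0,1])$ has, for each $\delta>0$, a tight number of points (immediate from the first moment bound $\esp\,\xi_n([\delta,\infty)\times\calF([0,1])) \to \delta^{-\alpha}<\infty$), so no mass escapes.

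\textbf{Main obstacle.} The delicate point is the second step — the asymptotic behavior of the occupied-box footprints $R_{n,\ell}/n$ as random closed sets, jointly over several test intervals and jointly with the size-biasing induced by conditioning on $\varepsilon_\ell$ being large. One has to be careful that (a) the number of balls in a ``typical large-value box'' does not blow up — that is, the occupied boxes relevant for the limit have $K_{n,\ell}=O(1)$ balls, which requires controlling the low-frequency tail of $\nu$ via the regular variation \eqref{eq:RVp} and ruling out contributions from boxes with diverging occupancy; and (b) conditionally on $K_{n,\ell}=k$, the $k$ ball-positions behave asymptotically as $k$ independent uniforms, for which a de Finetti / exchangeability argument or an explicit combinatorial estimate on the joint law of $(R_{n,\ell_1},\dots,R_{n,\ell_m})$ given the occupancy vector is needed, together with showing the weak dependence between different occupied boxes vanishes in the limit. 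I expect the cleanest route is to push everything through the capacity-functional criterion: show that for every finite family of intervals $A_1,\dots,A_d$ and $J\subseteq\{1,\dots,d\}$,
\[
\lim_{n\to\infty}\frac{1}{K_n}\,\esp\#\ccbb{\ell: K_{n,\ell}\ne 0,\ R_{n,\ell}/n\cap A_j\ne\emptyset\ \forall j\in J}
 = \proba\pp{\calR\topp\beta\cap A_j\ne\emptyset\ \forall j\in J},
\]
compute the right-hand side from Lemma~\ref{lem:R} ($=\esp[1-(1-\sum_{j\in J}\Leb(A_j))^{Q_\beta}]$ when the $A_j$ are disjoint, with inclusion-exclusion in general), and then upgrade the expectation to convergence in probability by a variance bound, again exploiting that distinct boxes are only weakly dependent under \eqref{eq:RVp}. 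Once this is in hand, the Laplace-functional factorizes and the limit is recognized as the Poisson point process $\xi$, completing the proof; the statement about $M_n\weakto\mab$ (Theorem~\ref{thm:2}) then follows by the continuous-mapping-type argument applied to the sup-measure functional, using the sufficient condition for weak convergence in $\sm([0,1])$ recalled in the Preliminary background.
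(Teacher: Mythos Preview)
Your proposal is essentially correct, but it follows a genuinely different route from the paper's proof of Theorem~\ref{thm:1}.

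The paper does \emph{not} proceed via convergence of the empirical distribution of footprints. Instead, it (i) reorders the boxes by the top-$m$ order statistics of the $\varepsilon$'s and reduces to showing joint convergence of $(\varepsilon_{n,k}/b_n,\what R_{n,k}/n)_{k\le m}$ for each fixed $m$ (Proposition~\ref{prop:m}); (ii) \emph{Poissonizes} the model---replacing the deterministic times $1,\dots,n$ by arrival times of a standard Poisson process---so that, conditionally on the labels $\wt\ell_{n,k}$, the footprints $\wt R_{n,1},\dots,\wt R_{n,m}$ become \emph{independent} Poisson point configurations and their capacity functionals can be computed explicitly; and (iii) de-Poissonizes by showing the Hausdorff distance between $\what R_{n,k}/n$ and $\wt R_{n,k}/n$ vanishes in probability. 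A concentration lemma for $\wt K_n$ (Lemma~\ref{lem:tildeKn}) controls the combinatorial factor from the uniform random labeling. The passage from Proposition~\ref{prop:m} to the full Theorem~\ref{thm:1} is then a standard Laplace-functional truncation argument.

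Your route---condition on the urn scheme, show that the empirical measure $K_n^{-1}\sum_{\ell:K_{n,\ell}\ne 0}\delta_{R_{n,\ell}/n}$ on $\calF([0,1])$ converges in probability to the law of $\calR\topp\beta$, and then run the i.i.d.-array Laplace computation---is sound, but the empirical-measure step requires exactly the convergence in probability of the counts $\tau_{\vvA}^{\vvdelta}(n)/(n^\beta L(n))$ (for all patterns $\vvdelta$ over finite families of intervals), which the paper invokes from \citep{durieu17infinite} only in its \emph{second} proof of Theorem~\ref{thm:2}. So your plan is effectively a point-process-level lift of that second proof, whereas the paper's primary proof trades the external $\tau_{\vvA}^{\vvdelta}$ input for the Poissonization/coupling machinery. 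What your approach buys is that it avoids the somewhat delicate Hausdorff-metric comparison and the $L^p$ control of $\wt K_n$; what the paper's approach buys is that it is self-contained (the Poissonized capacity computation and the de-Poissonization are done from scratch) and makes the asymptotic independence of the top-$m$ footprints structurally transparent rather than a consequence of sampling without replacement from a converging empirical measure. One small slip: your formula $\esp[1-(1-\sum_{j\in J}\Leb(A_j))^{Q_\beta}]$ computes $\proba(\calR\topp\beta\cap(\cup_j A_j)\ne\emptyset)$, not the joint hitting probability $\proba(\calR\topp\beta\cap A_j\ne\emptyset\ \forall j)$; the latter indeed follows by inclusion--exclusion as you indicate, but the expression as written is the wrong one.
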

The second is the convergence of random sup-measures.
\begin{Thm}\label{thm:2}Under the assumption of Theorem \ref{thm:1}, we have
\[
\frac{1}{b_n}M_n\weakto \mab,\mmas n\to\infty,
\]
in $\sm([0,1])$. 
\end{Thm}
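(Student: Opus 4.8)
The plan is to deduce Theorem~\ref{thm:2} from the point‑process convergence of Theorem~\ref{thm:1} by a continuous‑mapping argument, after reducing the convergence in $\sm([0,1])$ to convergence of finite‑dimensional distributions. By the criterion recalled above \citep[Theorem~3.2]{obrien90stationary}, it suffices to prove that
\[
\pp{\tfrac1{b_n}M_n(A_1),\dots,\tfrac1{b_n}M_n(A_d)}\weakto\pp{\mab(A_1),\dots,\mab(A_d)}
\]
for every finite family of bounded open intervals $A_1,\dots,A_d\subset[0,1]$ with $\mab(A_j)=\mab(\wb A_j)$ almost surely. This side condition is automatic here: $\wb A_j\setminus A_j=\partial A_j$ has at most two points, and since the extremal coefficient functional of $\mab$ is $\theta_\beta(K)=\Leb(K)^\beta$, which vanishes on finite sets, we have $\mab(\partial A_j)=0$ a.s., whence $\mab(\wb A_j)=\mab(A_j)$ a.s.\ for every open interval $A_j$.

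Fixing such a family, I would realize both sides as images of $\xi_n$ and $\xi$ under one and the same map. On $\mathfrak M_+((0,\infty)\times\calF([0,1]))$ define $T=(T_1,\dots,T_d)$ by
\[
T_j(\mu):=\sup\ccbb{\gamma\,:\,(\gamma,F)\in\mu,\ F\cap A_j\ne\emptyset},\qquad\sup\emptyset:=0.
\]
From the definition \eqref{eq:xi_n} of $\xi_n$ together with the identity $M_n(A_j)=\max\{\varepsilon_\ell:R_{n,\ell}/n\cap A_j\ne\emptyset\}$ (with $\max\emptyset:=0$) one obtains $T_j(\xi_n)=b_n^{-1}M_n(A_j)$, while the representation \eqref{eq:Karlin_rsm01} gives $T_j(\xi)=\sup_\ell\Gamma_\ell^{-1/\alpha}\inddd{\calR_\ell\topp\beta\cap A_j\ne\emptyset}=\mab(A_j)$. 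It then remains, by the continuous‑mapping theorem applied to $\xi_n\weakto\xi$, to check that $\xi$ almost surely lies outside the set of discontinuity points of $T$.

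The continuity analysis of $T$ is the crux of the argument and the step I expect to be the main obstacle. Because $A_j$ is open, the hitting set $\ccbb{F\in\calF([0,1]):F\cap A_j\ne\emptyset}$ is open for the Fell topology, so $F\mapsto\inddd{F\cap A_j\ne\emptyset}$ is only lower semicontinuous; consequently $T_j$ is lower semicontinuous along vaguely convergent sequences, and — using also that $\xi$ is a.s.\ a configuration whose masses $\Gamma_\ell^{-1/\alpha}$ decrease to $0$, hence with only finitely many atoms above any positive level — one checks that $T$ is continuous at such a $\mu$ unless, for some $j$, $\mu$ carries an atom $(\gamma,F)$ with $F\cap\wb A_j\ne\emptyset$ but $F\cap A_j=\emptyset$, i.e.\ $F$ meets the finite set $\partial A_j$ without entering $A_j$. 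Here the structure of the limit saves us: since $\xi=\sum_\ell\delta_{(\Gamma_\ell^{-1/\alpha},\,\calR_\ell\topp\beta)}$ and, by Lemma~\ref{lem:R}, each $\calR_\ell\topp\beta$ is a finite union of independent uniform points, we have $\proba(\calR_\ell\topp\beta\cap\partial A_j\ne\emptyset)=0$ for every $\ell$ and $j$; summing over the countably many pairs $(\ell,j)$ shows that a.s.\ no atom of $\xi$ meets any $\partial A_j$, so $\xi$ is a.s.\ a continuity point of $T$. The continuous‑mapping theorem then yields $T(\xi_n)\weakto T(\xi)$, which is the desired finite‑dimensional convergence, and the proof is complete. (One could instead prove the finite‑dimensional convergence directly, conditioning on the urn variables $\{Y_i\}$ and exploiting the regular variation \eqref{eq:RVepsilon}, but this would essentially reprove a part of Theorem~\ref{thm:1}, so routing through it is cleaner.)
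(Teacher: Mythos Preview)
Your proof is correct and takes essentially the same route as the paper's (first) proof: reduce convergence in $\sm([0,1])$ to finite-dimensional convergence and read that off from the point-process convergence of Theorem~\ref{thm:1}. The only cosmetic difference is packaging---the paper rewrites $\{b_n^{-1}M_n(A_j)>z_j\}=\{\xi_n((z_j,\infty)\times\calF_{A_j})\ge1\}$ and invokes convergence of counts on these Fell-open sets directly, whereas you phrase the same step as a continuous-mapping argument for the sup functional $T$; your closing parenthetical also anticipates the paper's second, self-contained proof.
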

Theorem \ref{thm:1} is proved by analyzing the top order statistics of the model and their locations. Theorem \ref{thm:2} is a direct corollary of Theorem \ref{thm:1}. Nevertheless, we will also give a second proof of it which is straightforward, without any analysis of the other top order statistics except the largest.

\begin{Rem}\label{rem:pa}
In the representation of the law of $\calR\topp\beta$ in Lemma \ref{lem:R}, the probability mass function $\{p_\beta(k)\}_{k\in\N}$ has an intrinsic connection to the Karlin model: each $p_\beta(k)$ is the asymptotic frequency of the number of boxes with exactly $k$ balls, namely 
\[
\limn\frac1{K_n}\sif\ell1\inddd{K_{n,\ell} = k} = p_\beta(k) \;\mbox{ a.s.}
\]
This has been known since \citet{karlin67central}. 
\end{Rem}
\begin{Rem}
For the sake of simplicity, we do not introduce a slowly varying function in \eqref{eq:RVepsilon} as in the common setup for heavy-tailed random variables.
Replacing \eqref{eq:RVepsilon} by
\[
\proba(\varepsilon_1>y) \sim y^{-\alpha}\ell(y) \mmas y\to\infty
\]
with $\alpha>0$ and $\ell$ a slowly varying function, 
the same limit arises while the correct normalization would involve the Bruijn conjugate (e.g.~\citep[Proposition 1.5.15]{bingham87regular}).
\end{Rem}

\section{Proofs}\label{sec:proofs}
In order to analyze the point process $\xi_n$, we introduce a description of it through the extreme values of the Karlin model.
For each $n\in\N$, we consider the $K_n$ random variables
\[
\ccbb{\varepsilon_\ell:K_{n,\ell}\ne 0}
\]
and their order statistics denoted by 
\[
\varepsilon_{n,1}\ge \cdots\ge \varepsilon_{n,K_n}.
\]
When there are no ties, we let $\what\ell_{n,k}$ denote the label of the box corresponding to the $k$-th order statistics, so that
\[
\varepsilon_{n,k} = \varepsilon_{\what \ell_{n,k}}, \mbox{ for } k\le K_n,
\]
and set $\what\ell_{n,k}:=0$ for $k>K_n$.
When there are ties among the order statistics, the aforementioned labeling is no longer unique, and we choose one 
at random among all possible ones in a uniform way.
This procedure guarantees the independence between the values of the order statistics and the permutation that classifies them. That is, given $K_n$, the variables $\what\ell_{n,1},\ldots,\what\ell_{n,K_n}$ are independent of the variables $\varepsilon_{n,1},\ldots,\varepsilon_{n,K_n}$.
Now, introduce the random closed sets
\[
\what R_{n,k}:=\ccbb{i=1,\dots n:Y_i = \what \ell_{n,k}},\quad k=1,\dots,K_n,
\]
and $\what R_{n,k} :=\emptyset$ if $k>K_n$. 
The point processes $\xi_n$ introduced in \eqref{eq:xi_n} can then be written as
\[
\xi_n 
= \summ k1{K_n} \delta_{\pp{\varepsilon_{n,k}/b_n,\what R_{n,k}/n}}.
\]
The key step in our proof is to investigate the following approximations of $\xi_n$, keeping only the top order statistics,
\[
\xi_n\topp m:= \summ k1m \delta_{\pp{\varepsilon_{n,k}/b_n, \what R_{n,k}/n}},\quad m\in\N.
\]
Here and below, we set $\varepsilon_{n,k} := 0$ if $k>K_n$. 
\begin{Prop}\label{prop:m}
For all $m\in\N$, we have
\[
\xi_n\topp m\weakto \xi\topp m :=\summ \ell 1m \delta_{\pp{\Gamma_\ell^{-1/\alpha},\calR_\ell\topp\beta}},\mmas n\to\infty,
\]
in $\mathfrak M_+((0,\infty)\times\calF([0,1]))$, 
where $\{(\Gamma_\ell,\calR_\ell\topp\beta)\}_{\ell\in\N}$ have the same law as in \eqref{eq:Karlin_rsm01}.
\end{Prop}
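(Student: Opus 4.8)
The plan is to establish the joint convergence of the $m$ largest values and their induced location-sets via a conditioning argument that separates the "value" structure from the "position" structure. First I would recall the basic fact about the Karlin urn scheme under \eqref{eq:RVp}: for a fixed sample size $n$, the box labels attaining the top $m$ of the $K_n$ associated $\varepsilon$-values are, thanks to the uniform random tie-breaking described above, independent of the actual order statistics $\varepsilon_{n,1},\dots,\varepsilon_{n,m}$ given $K_n$. So I can treat two pieces separately. The values: since $\{\varepsilon_\ell:K_{n,\ell}\ne0\}$ is a family of $K_n$ i.i.d.\ random variables with $\proba(\varepsilon_1>y)\sim c_\alpha y^{-\alpha}$, and $K_n\sim\Gamma(1-\beta)n^\beta L(n)$ a.s.\ by \citet{karlin67central}, the classical LePage/extremal argument gives $(\varepsilon_{n,1}/b_n,\dots,\varepsilon_{n,m}/b_n)\weakto(\Gamma_1^{-1/\alpha},\dots,\Gamma_m^{-1/\alpha})$ with the normalization $b_n$ in \eqref{eq:bn}, because $K_n\proba(\varepsilon_1>b_n x)\to x^{-\alpha}$ a.s. The positions: conditionally on having selected a "typical" non-empty box (more precisely, a box chosen by size-biased sampling from the block-size profile, cf.\ Remark~\ref{rem:pa}), the rescaled occupancy set $\what R_{n,k}/n$ should converge in $\calF([0,1])$ to $\calR\topp\beta$.

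The heart of the matter is the position convergence, and this is the step I expect to be the main obstacle. I would argue that, because the top-$m$ labels are, given $K_n$, a uniformly random $m$-subset of the non-empty boxes, each $\what\ell_{n,k}$ is asymptotically a size-biased pick from the empirical distribution of block sizes; by Remark~\ref{rem:pa} the number of balls $K_{n,\what\ell_{n,k}}$ converges in distribution to $Q_\beta$ with mass function $p_\beta$. Conditionally on $K_{n,\what\ell_{n,k}}=q$, the $q$ hitting times in $\{1,\dots,n\}$ are (asymptotically) a uniform random $q$-subset, so $\what R_{n,k}/n$ converges to $\bigcup_{i=1}^q\{U_i\}$ with $U_i$ i.i.d.\ uniform on $(0,1)$; averaging over the limiting law of $q$ gives exactly the representation of $\calR\topp\beta$ from Lemma~\ref{lem:R}. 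To make this rigorous in $\calF([0,1])$ with the Fell topology, I would verify the sufficient condition quoted from \citep[Corollary 1.6.9]{molchanov05theory}: for a finite union $A$ of bounded open intervals with $\proba(\calR\topp\beta\cap\partial A\ne\emptyset)=0$ (which holds since $\calR\topp\beta$ is a.s.\ a finite set of continuous random points), show $\proba(\what R_{n,k}\cap(nA)\ne\emptyset)\to\proba(\calR\topp\beta\cap A\ne\emptyset)=\Leb(A\cap[0,1])^\beta$. This last probability can be computed directly from the urn scheme: a box with $q$ balls misses $nA$ with probability $\sim(1-\Leb(A))^q$, and summing against the limiting size-biased law $p_\beta$ reproduces $1-\Leb(A)^\beta$ via the identity $1-z^\beta=\esp[(1-z)^{Q_\beta}]$ used in Lemma~\ref{lem:R}.

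Next I would assemble the joint statement. For point-process convergence in $\mathfrak M_+((0,\infty)\times\calF([0,1]))$ of the finite sum $\xi_n\topp m=\sum_{k=1}^m\delta_{(\varepsilon_{n,k}/b_n,\what R_{n,k}/n)}$, it suffices to prove joint weak convergence of the $2m$-tuple
\[
\pp{\frac{\varepsilon_{n,1}}{b_n},\dots,\frac{\varepsilon_{n,m}}{b_n},\frac{\what R_{n,1}}{n},\dots,\frac{\what R_{n,m}}{n}}
\weakto
\pp{\Gamma_1^{-1/\alpha},\dots,\Gamma_m^{-1/\alpha},\calR_1\topp\beta,\dots,\calR_m\topp\beta}
\]
in $(0,\infty)^m\times\calF([0,1])^m$. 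The conditional independence of labels and values given $K_n$ splits this: the value-block converges as above, and conditionally on the labels the position-block is a vector of independent rescaled occupancy sets each converging to an independent copy of $\calR\topp\beta$ (the independence of $\calR_1\topp\beta,\dots,\calR_m\topp\beta$ in the limit comes from the fact that distinct top boxes have asymptotically non-overlapping, asymptotically independent occupancy sets — their joint occupancy in any fixed $A$ factorizes in the limit). A routine continuous-mapping / subsequence argument then upgrades these marginal convergences to the joint one, using that $b_n\to\infty$ forces the value coordinates into $(0,\infty)$ and that the limiting point configuration a.s.\ charges no point with a value coordinate equal to any fixed threshold. Since the test-function class for $\mathfrak M_+$-convergence on this product space is generated by continuous compactly supported functions of finitely many coordinates, convergence of these $2m$-dimensional marginals for all $m$ yields $\xi_n\topp m\weakto\xi\topp m$, which is the claim.
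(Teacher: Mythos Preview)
Your outline shares the paper's opening reduction: convergence of $\xi_n\topp m$ amounts to joint convergence of the $2m$-tuple, and the value block is handled by the LePage argument combined with $K_n\sim\Gamma(1-\beta)n^\beta L(n)$ a.s. The paper also uses the conditional independence (given $\{K_n\ge m\}$) of the value block and the position block.

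Where you diverge from the paper is in the treatment of the position block, and this is where there is a genuine gap. You assert that ``conditionally on the labels the position-block is a vector of independent rescaled occupancy sets''. This is false in the discrete model: given $\what\ell_{n,1}=\ell_1,\dots,\what\ell_{n,m}=\ell_m$, the sets $\what R_{n,1},\dots,\what R_{n,m}$ are pieces of the random partition of $\{1,\dots,n\}$ induced by $(Y_1,\dots,Y_n)$, and are \emph{not} independent. You then retreat to the weaker claim that ``their joint occupancy in any fixed $A$ factorizes in the limit'', but you provide no computation supporting this, and your marginal argument via $Q_\beta$ and Lemma~\ref{lem:R} does not extend to the joint statement without further work. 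The asymptotic independence is plausible, but it is the crux of the proof and cannot be left as an assertion.

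The paper resolves exactly this difficulty by \emph{Poissonization}: it couples the model to one where the throwing times $1,\dots,n$ are replaced by the arrival times of a standard Poisson process. In the Poissonized model the occupancy sets $\wt R_{n,1},\dots,\wt R_{n,m}$ \emph{are} conditionally independent given the labels, by the thinning property of Poisson processes; each is (conditionally) the set of arrivals in $[0,n]$ of a Poisson process of intensity $p_{\ell_k}$, given non-emptiness. This exact independence allows a direct computation of the joint hitting probability
\[
\proba\pp{\bigcap_{k=1}^m\ccbb{\frac1n\wt R_{n,k}\cap A_k\ne\emptyset}}\to\prodd k1m\Leb(A_k)^\beta,
\]
via an explicit summation over labels and the regular-variation asymptotics for $\int_0^\infty(1-e^{-\lambda n/x})\nu(dx)$. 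The paper then transfers this back to the original model by showing that $\what R_{n,k}/n$ and $\wt R_{n,k}/n$ are close in Hausdorff distance with high probability, using law-of-large-numbers control on $|N(n)-n|$ and related quantities.

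Your route through the $Q_\beta$ representation is attractive and could in principle be made to work for the marginal convergence of a single $\what R_{n,k}/n$, but for the joint statement you would still need a mechanism that produces independence in the limit. Absent a direct combinatorial argument (which you do not supply), Poissonization is the natural tool, and that is what the paper uses.
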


\begin{proof}There is only a finite number of random points in both $\xi_n\topp m$ and $\xi\topp m$. Hence, it suffices to prove the joint convergence
\equh\label{eq:fdd2}
\pp{\frac{\varepsilon_{n,1}}{b_n},\dots,\frac{\varepsilon_{n,m}}{b_n},\frac{\what R_{n,1}}n,\dots,\frac{\what R_{n,m}}n}\weakto \pp{\Gamma_1^{-1/\alpha},\dots,\Gamma_m^{-1/\alpha},\calR_1\topp\beta,\dots,\calR_m\topp\beta} 
\eque
in $\R_+^m\times \calF([0,1])^m$, as $n\to\infty$. 
Under the heavy-tail assumption \eqref{eq:RVepsilon}, the convergence of the first $m$ coordinates, as the normalized $m$ top order statistics of $K_n$ i.i.d.\ random variables, is well known from \citep{lepage81convergence} if $K_n$ is a deterministic sequence increasing to infinity and the normalization (here $b_n$) is $c_\alpha^{1/\alpha} K_n^{1/\alpha}$. For the Karlin model, under the regular variation assumption \eqref{eq:RVp}, it has been shown that 
\[
\limn \frac{K_n}{n^\beta L(n)} = \Gamma(1-\beta)\; \mbox{ a.s.},
\]
see \citep[Corollary 21]{gnedin07notes}.
Therefore the convergence of the first $m$ coordinates follows.
Further, 
on the left-hand side of \eqref{eq:fdd2}, the first and last $m$ coordinates are 
conditionally independent given the event $\{K_n\ge m\}$. Since $\proba\pp{K_n\ge m}\to 1$ as $n\to\infty$, it is sufficient to
prove the convergence of the last $m$ coordinates to conclude.
The main difficulty in the analysis of the last $m$ coordinates is due to 
their dependence. To overcome this difficulty, we first consider a coupled Poissonized version of the model.
Namely, let $\{N(t)\}_{t\ge 0}$ denote a standard Poisson process on $\R_+$ independent of $\indn Y$ and $\indn\varepsilon$, and let $0<\tau_1<\tau_2<\cdots$ denote its consecutive arrival times. 
We consider the coupled model where we shift the fixed locations $1,2,\dots,n$ of the original model to 
the random points corresponding to the consecutive random arrival times of $N$. The Poissonized process is then $\{X_{N(t)}\}_{t\ge 0}$. 
In this way, we set
\begin{equation}\label{eq:def-tildeKn}
\wt K_{n,\ell} := \sif i1\inddd{Y_i = \ell,\,\tau_i \le n} \qmand \wt K_n:= \sif \ell1\inddd{\wt K_{n,\ell}\ne 0}.
\end{equation}
It is important to keep in mind that, for this model, there are $\wt K_n$ different $\varepsilon$ involved at time $n$, instead of $K_n$. Note that,  thanks to the coupling, $\wt K_n=K_{N(n)}$.
Thus, the order statistics of the set $\{\varepsilon_\ell : \wt K_{n,\ell}\ne 0\}$
are exactly $\varepsilon_{N(n),1}\ge \cdots\ge \varepsilon_{N(n),\wt K_{n}}$. 
Now, introduce $\wt \ell_{n,k}$ such that 
\[
\varepsilon_{\wt\ell_{n,k}} = \varepsilon_{N(n),k},\quad k=1,\dots,\wt K_n,
\]
and $\wt\ell_{n,k} := 0$ if $k>\wt K_n$. Again, in case of ties, we choose uniformly a random labeling as before.
Then we define
\[
\wt R_{n,k}:= \ccbb{\tau_i: Y_i = \wt\ell_{n,k}}\cap[0,n],\quad k=1,\ldots, \wt K_n.
\]
The key observation on the Poissonization procedure is that given that  $\wt\ell_{n,1} = \ell_1,\dots,\wt \ell_{n,m}=\ell_m$, with $\ell_1,\dots,\ell_m>0$, $\wt R_{n,1},\dots,\wt R_{n,m}$ are independent random closed sets; this is a consequence of the thinning property of Poisson processes. 
Moreover, the law of each $\wt R_{n,k}$ is the 
conditional law of the set of the arrival times of a Poisson process with intensity $p_{\ell_k}$ within $[0,n]$, given that it is not empty.

We first show that
\equh\label{eq:wtR}
\pp{\frac{\wt R_{n,1}}n,\dots,\frac{\wt R_{n,m}}n} \weakto \pp{\calR_1\topp \beta,\dots,\calR_m\topp\beta}.
\eque
Let $A_1,\dots,A_m$ be $m$ open intervals within $(0,1)$.
We first compute
\begin{multline}\label{eq:Ak}
\proba\pp{\bigcap_{k=1}^m\ccbb{\frac1n\wt R_{n,k}\cap A_k \ne\emptyset}}\\
= \sum_{\ell_1,\dots,\ell_m\in\N}\proba\pp{\bigcap_{k=1}^m\ccbb{\frac1n\wt R_{n,k}\cap A_k \ne\emptyset}\cap\ccbb{ \wt\ell_{n,k} = \ell_k}}. 
\end{multline}
For every  choice of $\ell_1,\dots,\ell_m\in\N$ that are mutually distinct (otherwise the probability above is zero), let $N_k$ be a Poisson process with parameter $p_{\ell_k}$, $k=1,\dots,m$, and $\wt R_k$ the corresponding random closed set induced by its arrival times in $[0,n]$.
Given $\{\wt K_{n,\ell}\}_{\ell\in\N}$, the probability of the event 
$\{\wt\ell_{n,1} = \ell_1,\ldots, \wt\ell_{n,m} = \ell_m\}$ is 
\[
 \indd{\wt K_{n,\ell_1}\ne0,\ldots, \wt K_{n,\ell_m}\ne0 }\frac{(\wt K_n-m)!}{\wt K_n !}
\]
as each non-empty box has equal probability to be the $k$-th largest (above $j!$ stands for the factorial of the non-negative integer $j$).
Therefore
by conditioning on the values of $\{\wt\ell_{n,k}\}_{k=1,\dots,m}$ first, 
and then using the independence of the $\wt K_{n,\ell}$, we have, 
letting $\lambda_k$ denote the Lebesgue measure of $A_k$, 
\begin{align*}
\proba & \pp{\bigcap_{k=1}^m\ccbb{\frac1n\wt R_{n,k}\cap A_k \ne\emptyset}\cap\ccbb{ \wt\ell_{n,k} = \ell_k}}\\
& = \esp\bb{ \frac{(\wt K_n-m)!}{\wt K_n !}\prodd k1m\indd{\wt K_{n,\ell_k}\ne 0}\,\proba\pp{\wt R_k\cap nA_k\ne\emptyset\mid \wt R_k\cap [0,n]\ne\emptyset}}\\
& = \esp\bb{  \frac{(\wt K_n-m)!}{\wt K_n !}\prodd k1m \indd{\wt K_{n,\ell_k}\ne 0}\pp{\frac{1-e^{-\lambda_knp_{\ell_k}}}{1-e^{-np_{\ell_k}}}}}\\
& = \esp \bb{\frac{\wt K_n^{(\ell_1,\ldots,\ell_m)}!}{\spp{m+\wt K_n^{(\ell_1,\ldots,\ell_m)}}!}}\prodd k1m (1-e^{-\lambda_knp_{\ell_k}}),
\end{align*}
where 
\[
 \wt K_n^{(\ell_1,\ldots,\ell_m)}=\sum_{\ell\ge1,\, \ell \not\in\{\ell_1,\ldots,\ell_m\}} \indd{\wt K_{n,\ell}\ne0}.
\]
We shall prove, in Lemma \ref{lem:tildeKn} below, 
that $\wt\Phi_n/((\wt K_n-m)\vee 1)\to 1$ and $\wt\Phi_n/(\wt K_n+m)\to 1$ in $L^m$, where $\wt\Phi_n:=\esp \wt K_n \sim \Gamma(1-\beta) n^\beta L(n)$ according to \citep[Proposition~17 and Lemma~1]{gnedin07notes}. 
Using that 
\[
\frac{1}{(m + \wt K_n)^m}\le \frac{\wt K_n^{(\ell_1,\ldots,\ell_m)}!}{\spp{m+\wt K_n^{(\ell_1,\ldots,\ell_m)}}!}\le\frac{1}{((\wt K_n-m)\vee 1)^m}  ,
\]
we infer that 
\[
 \frac{\wt K_n^{(\ell_1,\ldots,\ell_m)}!}{\spp{m+\wt K_n^{(\ell_1,\ldots,\ell_m)}}!}\sim \frac{1}{\wt \Phi_n^m}\text{ in $L^1$, uniformly in }(\ell_1,\ldots,\ell_m), \mbox{ as } n\to\infty.
\]
The right-hand side of \eqref{eq:Ak} then becomes 
\begin{align}
\sum_{\ell_1,\dots,\ell_m\in\N,\ne} &\esp \bb{\frac{\wt K_n^{(\ell_1,\ldots,\ell_m)}!}{\spp{m+\wt K_n^{(\ell_1,\ldots,\ell_m)}}!}}\prodd k1m (1-e^{-\lambda_knp_{\ell_k}}) \nonumber\\
& \sim \frac1{(\Gamma(1-\beta) n^\beta L(n))^m}\sum_{\ell_1,\dots,\ell_m\in\N,\ne} \prodd k1m\pp{1-e^{-\lambda_knp_{\ell_k}}} \label{eq:ell_ne},\text{ as }n\to\infty,
\end{align}
where in the summation, $\ne$ indicates that $\ell_1,\dots,\ell_m$ are mutually distinct.
If we sum over all $\ell_1,\dots,\ell_m\in\N$ instead, recalling the definition of $\nu$ in \eqref{nu},
we have
\equh\label{eq:ell_all}
\sum_{\ell_1,\dots,\ell_m\in\N}\prodd k1m \pp{1-e^{-\lambda_knp_{\ell_k}}} = \prodd k1m \int_0^\infty (1-e^{-\lambda_k n/x})\nu(dx).
\eque
For the Karlin model, it is well known that the regular variation assumption \eqref{eq:RVp} on $\nu$ leads to, after integration by parts and change of variables,
\begin{align*}
\int_0^\infty(1-e^{-\lambda n/x})\nu(dx) & = \int_0^\infty \frac{\lambda n}{x^2}e^{-\lambda n/x}\nu((0,x])dx\\
& \sim \nu((0,n])\lambda \int_0^\infty x^{\beta-2}e^{-\lambda /x}dx = \nu((0,n])\lambda^\beta\Gamma(1-\beta).
\end{align*}
This gives the asymptotic of \eqref{eq:ell_all}, and also tells that the summations in \eqref{eq:ell_all} and \eqref{eq:ell_ne} are asymptotically equivalent. Therefore, we have shown that
\[
\limn \proba\pp{\bigcap_{k=1}^m\ccbb{\frac1n\wt R_{n,k}\cap A_k \ne\emptyset}} = \prodd k1m \lambda_k^\beta. 
\]
This established the claimed weak convergence in \eqref{eq:wtR}.

To complete the proof, it remains to show that $\wt R_{n,k}/n$ and $\what R_{n,k}/n$ can be made close with arbitrarily high probability by taking $n$ large enough. To make this idea precise, we consider the Hausdorff metric $\dH$ for non-empty compact sets defined as, for two non-empty compact sets $F_1$ and $F_2$,
\[
\dH(F_1,F_2) := \max\ccbb{\sup_{x\in F_1}d(x,F_2),\sup_{x\in F_2}d(x,F_1)},
\]
where $d$ above is the distance between a point and a set induced in $\R$ by Euclidean metric: $d(x,A) := \inf_{y\in A}|x-y|$. It is known that $\dH$ metricizes the Fell topology on $\calF'([0,1]) := \calF([0,1])\setminus\{\emptyset\}$. See for example \citep[Appendix C]{molchanov05theory}. For $n$ large enough, consider the event
\[
B\topp m_n :=\{K_n\ge m\}\cap\{\wt K_n\ge m\},
\]
so that, under $B\topp m_n$, $\what R_{n,k} \ne \emptyset$ and $\wt R_{n,k} \ne \emptyset$ for all $k=1,\ldots,m$.
It is clear that $\limn\proba(B_n\topp m) = 1$. Therefore, \eqref{eq:fdd2} and hence the proposition shall follow from \eqref{eq:wtR} and the fact that for all $\delta>0$, 
\equh\label{eq:dH_approximation}
\limn \proba\pp{\ccbb{\max_{k=1,\dots,m}\dH\pp{\frac{\what R_{n,k}}n,\frac{\wt R_{n,k}}n}>\delta }\cap B_n\topp m} = 0.
\eque
To prove \eqref{eq:dH_approximation}, we first introduce the event 
\[
E_n\topp m =\left\{\what\ell_{n,1}=\wt\ell_{n,1},\ldots,\what\ell_{n,m}=\wt\ell_{n,m} \right\},
\]
and we shall prove that $\limn\proba(E_n\topp m)=1$.
Since the probability of the event
\[
 T_n\topp m:=\ccbb{\text{no ties in the $m+1$ top order statistics of }\{\varepsilon_\ell : K_{n,\ell}\ne 0 \text{ or } \wt K_{n,\ell}\ne 0\}}
\]
goes to $1$ as $n\to\infty$, this will follow if one can show that
\begin{equation}\label{eq:E_n^m}
\lim_{n\to\infty}\proba\pp{E_n\topp m\cap B_n\topp m\cap T_n\topp m}= 1.
\end{equation} 
Assuming $B\topp m_n$ and  $T_n\topp m$, the event $E_n\topp m$ holds if the $m$ top order statistics from the set $\{\varepsilon_\ell: K_{n,\ell}\ne 0 \text{ or } \wt K_{n,\ell}\ne 0\}$ already appear in the subset $\{\varepsilon_\ell: K_{n,\ell}\ne 0 \text{ and } \wt K_{n,\ell}\ne 0\}$.
Given $K_n^\wedge:=K_n\wedge\wt K_n$ and $K^\vee_n:=K_n\vee\wt K_n$, using the fact that the locations (labellings) of the order statistics among $\{1,\dots,K^\vee_n\}$ are uniformly distributed,
the desired probability is the one that, when taking uniformly at random a permutation of $K_n^\vee$ elements, the $m$ first elements of the permutation belong to a fixed subset of $K_n^\wedge$ elements.
Thus, we infer that 
\[
\proba\pp{E_n\topp m\cap B_n\topp m\cap T_n\topp m} =
\esp\bb{
\frac{K_n^\wedge(K_n^\wedge-1)\cdots( K_n^\wedge-m+1)}{K^\vee_n(K^\vee_n-1)\cdots(K^\vee_n-m+1)}
\ind_{B_n\topp m}\ind_{T_n\topp m}}.
\]
The quotient in the expectation converges to $1$ almost surely and it is bounded by $1$. Therefore, by the dominated convergence theorem, we obtain \eqref{eq:E_n^m} and thus $\limn\proba(E_n\topp m)=1$.

From now on, we assume that the events $E_n\topp m$ and $B_n\topp m$ hold. 
Let $k\in\{1,\ldots,m\}$ be fixed and denote $\ell_k=\what\ell_{n,k}=\wt\ell_{n,k}$.
Recall our definition of $\tau_i$, the $i$-th arrival time of the Poisson process $N$ in the Poissonization and set 
\[
\rho_n:=\max_{i=1,\dots,n}|i-\tau_i|,
\]
the maximal displacement of the positions $1,\dots,n$ by the Poissonization. Consider also the Poisson process $N_k$ derived from $N$ by keeping only the arrival times corresponding to the box $\ell_k$ ($N_k(t):=\summ i1\infty \indd{\tau_i \le t}\indd{Y_i =\ell_k}, t\ge0$). Thus, $N_k$ is a Poisson process of intensity $p_{\ell_k}$ and we denote by $\tau^{(k)}_1< \tau^{(k)}_2< \cdots$ its consecutive arrival times.

Consider $i\in \what R_{n,k}$ and first assume that $i$ is such that $\tau_i\le n$ and hence $\tau_i\in \wt R_{n,k}$. 
In this case we have $d(i,\wt R_{n,k})\le |i-\tau_i|\le \rho_n$. On the other hand, for $i\in \what R_{n,k}$ such that $\tau_i> n$, we have 
\[
d(i,\wt R_{n,k})\le |i-\tau^{(k)}_{N_k(n)}|\le |i-n|\wedge|\tau^{(k)}_{N_k(n)}-n|.
\]
Since in this case $N(n)<i<n$, we have $|i-n|\le |N(n)-n|$ and hence
\equh\label{eq:dH1} 
\sup_{i\in\what R_{n,k}} d(i,\wt R_{n,k})\le \max\left\{\rho_n, |N(n)-n|,|\tau^{(k)}_{N_k(n)}-n| \right\}.
\eque
Now, consider $\tau_i\in\wt R_{n,k}$. For such $\tau_i$ with $i\in\{1,\dots,n\}$, we have  $d(\tau_i,\what R_{n,k})\le |\tau_i-i|\le \rho_n$, whereas for $\tau_i\in\wt R_{n,k}$ with $i>n$, denoting by $j_k$ the maximum of $\what R_{n,k}$ (non-empty by assumption), we have
\[
 d(\tau_i,\what R_{n,k})\le |\tau_i-j_k|\le |\tau_i-\tau_{j_k}|+|\tau_{j_k}-j_k|\le  |n-\tau_{j_k}|+|\tau_{j_k}-j_k|,
\]
where we used that $\tau_{j_k}\le \tau_i\le n$ in the last inequality.
Note that $\tau_{j_k}=\tau^{(k)}_{N_k(\tau_n)}$ and thus,
\[
  \sup_{i :\, \tau_i\in  \wt R_{n,k}
  } d(\tau_i,\what R_{n,k})\le \rho_n + |\tau^{(k)}_{N_k(\tau_n)}-n|.
\]
Therefore, above and \eqref{eq:dH1} yield
\[
 \dH\pp{\frac{\what R_{n,k}}n,\frac{\wt R_{n,k}}n} \le \max\left\{\frac{|N(n)-n|}{n},\frac{|\tau^{(k)}_{N_k(n)}-n|}{n}, \frac{\rho_n}{n} + \frac{|\tau^{(k)}_{N_k(\tau_n)}-n|}{n}  \right\}.
\]

It is well known that $\limn \rho_n/n=0$ and $\limn{|N(n)-n|}/{n} = 0$ almost surely. Furthermore,
\[
\limn\frac{\tau^{(k)}_{N_k(n)}}{n}=\limn\frac{\tau^{(k)}_{N_k(n)}}{N_k(n)}\frac{N_k(n)}{n} = p_{\ell_k}\frac1{p_{\ell_k}}=1 \mbox{  almost surely} 
\]
 and hence  $\limn{\tau^{(k)}_{N_k(\tau_n)}}/{n}= 1$ almost surely.
This established \eqref{eq:dH_approximation} and the proposition.
\end{proof}

\begin{Lem}\label{lem:tildeKn}
Let $\{\wt K_n\}_{n\ge 1}$ be the process defined in \eqref{eq:def-tildeKn} and $\wt\Phi_n=\esp \wt K_n$, $n\ge1$. For any real constant $c$, we have
\[
 \frac{\wt\Phi_n}{(\wt K_n+c)\vee1}\to 1,\; \text{ as }n\to \infty,
\]
almost surely and in $L^p$ for all $p\ge 1$.
\end{Lem}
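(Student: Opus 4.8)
The plan is to reduce the lemma to two facts about the Poissonized count $\wt K_n$: an almost sure law of large numbers and a Chernoff-type deviation bound. The reason the Poissonized model is the convenient object here is the thinning property: by it, $\wt K_{n,\ell}$ in \eqref{eq:def-tildeKn} is a Poisson random variable with mean $np_\ell$ and the $\wt K_{n,\ell}$, $\ell\ge1$, are \emph{independent}. Hence
\[
\wt K_n=\sif\ell1\indd{\wt K_{n,\ell}\ne0}
\]
is a sum of independent Bernoulli random variables, the $\ell$-th with success probability $1-e^{-np_\ell}$, so that $\wt\Phi_n=\esp\wt K_n=\sif\ell1(1-e^{-np_\ell})$; this is precisely the quantity shown in \citep[Proposition~17 and Lemma~1]{gnedin07notes} to satisfy $\wt\Phi_n\sim\Gamma(1-\beta)n^\beta L(n)$, so in particular $\wt\Phi_n\to\infty$ at least polynomially fast while $\wt\Phi_n/n\to0$.

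For the almost sure statement I would use the coupling $\wt K_n=K_{N(n)}$ recorded before the lemma. Since $K_m/(\Gamma(1-\beta)m^\beta L(m))\to1$ almost surely by \citep{karlin67central} and $N(n)/n\to1$ almost surely, regular variation of $m\mapsto m^\beta L(m)$ gives $\wt K_n=K_{N(n)}\sim\Gamma(1-\beta)n^\beta L(n)\sim\wt\Phi_n$ almost surely. Consequently $\bigl((\wt K_n+c)\vee1\bigr)/\wt\Phi_n\to1$ and therefore $\wt\Phi_n/\bigl((\wt K_n+c)\vee1\bigr)\to1$ almost surely, for every fixed real $c$.

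The upgrade to $L^p$ is where the independent-Bernoulli structure is used: the multiplicative Chernoff bound gives, for each $\epsilon\in(0,1)$, a constant $c_\epsilon>0$ with $\proba\bigl(|\wt K_n-\wt\Phi_n|>\epsilon\wt\Phi_n\bigr)\le 2e^{-c_\epsilon\wt\Phi_n}$, which decays faster than any negative power of $n$ because $\wt\Phi_n$ grows polynomially. Fix $p\ge1$, a real $c$, and $\epsilon\in(0,1/3]$, and set $Z_n:=\wt\Phi_n/\bigl((\wt K_n+c)\vee1\bigr)$. On $A_{n,\epsilon}:=\{|\wt K_n-\wt\Phi_n|\le\epsilon\wt\Phi_n\}$ one has, for $n$ large, $(\wt K_n+c)\vee1=\wt K_n+c$, so $Z_n$ is squeezed between $\wt\Phi_n/\bigl((1+\epsilon)\wt\Phi_n+c\bigr)$ and $\wt\Phi_n/\bigl((1-\epsilon)\wt\Phi_n+c\bigr)$, whence $|Z_n-1|\le2\epsilon$ eventually; on $A_{n,\epsilon}^c$ the crude bound $Z_n\le\wt\Phi_n$ yields $|Z_n-1|^p\le(\wt\Phi_n+1)^p$. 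Therefore
\[
\esp|Z_n-1|^p\le(2\epsilon)^p+(\wt\Phi_n+1)^p\cdot 2e^{-c_\epsilon\wt\Phi_n},
\]
the second term tends to $0$ (a polynomial factor against a super-polynomially small one), so $\limsupn\esp|Z_n-1|^p\le(2\epsilon)^p$; letting $\epsilon\downarrow0$ concludes.

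I expect the only genuine points requiring care are: (a) that the deviation bound truly applies, which is the whole reason for passing to the Poissonized $\wt K_n$ rather than working with the dependent original $K_n$; and (b) that the hypothesis $\beta>0$ enters essentially, through $\wt\Phi_n\to\infty$ at least polynomially, to make $e^{-c_\epsilon\wt\Phi_n}$ beat the polynomial factor $(\wt\Phi_n+1)^p$ in the display above. Everything else is routine bookkeeping around the split over $A_{n,\epsilon}$.
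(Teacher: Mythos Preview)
Your proof is correct and follows essentially the same approach as the paper: both exploit that $\wt K_n$ is a sum of independent Bernoulli variables to obtain an exponential deviation bound (the paper uses Bernstein's inequality with $\epsilon=\tfrac12$, you use the multiplicative Chernoff bound with variable $\epsilon$), and both combine this with the polynomial growth of $\wt\Phi_n$ to control the contribution of the bad event against the crude bound $Z_n\le\wt\Phi_n$. The only organizational difference is that the paper proves $L^q$-boundedness for some $q>p$ and invokes uniform integrability to pass from almost sure to $L^p$ convergence, whereas you estimate $\esp|Z_n-1|^p$ directly via the split over $A_{n,\epsilon}$ and let $\epsilon\downarrow0$; both routes are standard and equivalent in effort.
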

\begin{proof}
We know from \citep{gnedin07notes} that $\wt K_n\sim\wt\Phi_n$ almost surely and thus the almost sure convergence above follows.
Recalling that $\wt K_n$ is a sum of independent $\{0,1\}$-valued random variables and that $\var(\wt K_n)=\wt\Phi_{2n}-\wt\Phi_n\le \wt\Phi_n$, the Bernstein inequality (see e.g.~\citep{boucheron13concentration}) gives
\begin{align*}
 \proba\pp{\left|\frac{\wt K_n}{\wt \Phi_n}-1\right|>\frac12}&\le 2\exp\pp{-\frac{(\wt\Phi_n/2)^2}{2(\var(\wt K_n)+\wt\Phi_n/6)}}\le 2\exp\pp{-\frac{3}{28}\wt\Phi_n}.
\end{align*}
Let $p\ge 1$ and $q>p$ be fixed. Using the above inequality and the fact that 
$\wt K_n/((\wt K_n+c)\vee1) \le 1\vee(1-c)$, we have
\begin{align*}
\esp\pp{\frac{\wt \Phi_n}{(\wt K_n+c)\vee1}}^q
&=  \esp\pp{\pp{\frac{\wt \Phi_n}{(\wt K_n+c)\vee1}}^q\ind_{\left\{\frac{\wt K_n}{\wt\Phi_n}\ge\frac12\right\}} } +  \esp\pp{\pp{\frac{\wt \Phi_n}{(\wt K_n+c)\vee1}}^q\ind_{\left\{\frac{\wt K_n}{\wt\Phi_n}<\frac12\right\}} }\\
&\le 2^q (1\vee(1-c))^q + 2 \wt\Phi_n^q \exp\pp{-\frac{3}{28}\wt\Phi_n}.
\end{align*}
We infer that $\{{\wt \Phi_n}/((\wt K_n+c)\vee1)\}_{n\ge 1}$ is bounded in $L^q$ and then $\{[\wt \Phi_n/((\wt K_n+c)\vee1)]^p\}_{n\ge 1}$ is uniformly integrable. The desired $L^p$ convergence follows.
\end{proof}

\begin{proof}[Proof of Theorem \ref{thm:1}]
To prove the convergence of the point processes of interest, we compute their Laplace transform:
\[
\Psi_{\xi_n}(f) := \esp\exp \pp{-\xi_n(f)} = \esp\exp\pp{-\summ k1{K_n}f\pp{\varepsilon_{n,k}/b_n,\what R_{n,k}/n}},
\]
for $f\in C_K^+((0,\infty)\times\calF([0,1]))$, the space of non-negative continuous functions with compact support. Similarly,
\[
\Psi_\xi(f) := \esp \exp\pp{-\sif \ell1 f\pp{\Gamma_\ell^{-1/\alpha},\calR_\ell^{(\beta)}}}
\]
is the Laplace transform of $\xi$. 
Recall that the desired convergence follows if and only if 
\equh\label{eq:Laplace_convergence}
\limn \Psi_{\xi_n}(f) = \Psi_\xi(f), \mfa f\in C_K^+((0,\infty),\calF([0,1])),
\eque
see for example \citep[Proposition 3.19]{resnick87extreme}.

Now we prove \eqref{eq:Laplace_convergence}. When investigating the weak convergence of point processes here, the topology on $(0,\infty)$ is such that all compact sets are bounded away from zero and $\calF([0,1])$ is itself a compact metric space. So, for any $f\in C_K^+((0,\infty)\times\calF([0,1]))$,  there exists $\kappa=\kappa(f)>0$ so that $f(x,F) = 0$ for all $x<\kappa$ and $F\in\calF([0,1])$. Given $f$ and thus $\kappa>0$ fixed, for all $\epsilon>0$, we can pick $m=m(\kappa,\epsilon)\in\N$ large enough, so that 
\[
\limn\proba\pp{B\topp m_{\kappa,n}} = \proba\pp{\Gamma_m^{-1/\alpha}<\kappa}>1-\epsilon \quad \text{ with }  B\topp m_{\kappa,n} := \ccbb{\frac{\varepsilon_{n,m}}{b_n}<\kappa}.
\]
Now we express $\Psi_{\xi_n}(f)$ as
\[
\Psi_{\xi_n}(f) = \esp\bb{\exp\pp{-\xi_n(f)}\ind_{B\topp m_{\kappa,n}}} + \esp\bb{\exp\pp{-\xi_n(f)}\ind_{(B\topp m_{\kappa,n})^c}}.
\]
The second term on the right-hand side above is then bounded by $1-\proba(B_{\kappa,\epsilon,n}\topp m)$. The first term equals
\equh\label{eq:CMT}
\esp\bb{\exp\pp{-\summ k1{m}f\pp{\varepsilon_{n,k}/b_n,\what R_{n,k}/n}}\ind_{B\topp m_{\kappa,n}}}.
\eque
This is the expectation of a function from $\R_+^m\times \calF([0,1])^m$ to $[0,1]$, continuous everywhere except at points from the set 
\equh\label{eq:discontinuity}
\ccbb{(x_1,\dots,x_m,F_1,\dots,F_m)\in\R_+^m\times\calF([0,1])^m: x_m = \kappa}.
\eque
We have seen the convergence $(\varepsilon_{n,k}/b_n,\what R_{n,k}/n)_{k=1,\dots,m}\weakto(\Gamma_k^{-1/\alpha},\calR_k\topp\beta)_{k=1,\dots,m}$  in Proposition~\ref{prop:m}, and we can notice that the set of discontinuity points \eqref{eq:discontinuity} above is hit by $(\Gamma_1^{-1/\alpha},\dots,\Gamma_m^{-1/\alpha},\calR_1\topp\beta,\dots,\calR_m\topp\beta)$ with probability zero. Therefore, applying the continuous mapping theorem to \eqref{eq:CMT}, we have that
\begin{align*}
\limsupn \Psi_{\xi_n}(f) & \le \esp\bb{\exp\pp{-\summ k1{m}f\pp{\Gamma_k^{-1/\alpha},\calR\topp\beta_k}}\inddd{\Gamma_m^{-1/\alpha}<\kappa}} + \epsilon\\
& = \esp\bb{\exp\pp{-\summ k1{\infty}f\pp{\Gamma_k^{-1/\alpha},\calR\topp\beta_k}}\inddd{\Gamma_m^{-1/\alpha}<\kappa}} + \epsilon\\
&\le \Psi_\xi(f)+ \epsilon.
\end{align*}
Similarly, one can show that
\[
\liminfn\Psi_{\xi_n}(f) \ge \Psi_\xi(f)-\proba\pp{\Gamma_m^{-1/\alpha}\ge \kappa} \ge \Psi_\xi(f)-\epsilon. 
\] 
Since $\epsilon>0$ is arbitrary, we have thus proved \eqref{eq:Laplace_convergence} for every test function $f$, and hence the desired result.
\end{proof}

\begin{proof}[Proof of Theorem \ref{thm:2}]
It suffices to prove, for all open intervals $A_1,\dots,A_d$ in $[0,1]$ and positive reals $z_1,\dots,z_d$, that
\begin{align*}
P_n &:=\proba\pp{\frac{M_n(A_1)}{b_n}> z_1,\dots,\frac{M_n(A_d)}{b_n}> z_d} \\
&\longrightarrow \, \proba(\mab(A_1)> z_1,\dots,\mab(A_d)> z_d):=P, \mmas n\to\infty.
\end{align*}
This is a direct consequence of Theorem~\ref{thm:1} since, denoting 
\[
\calF_{A_i}=\{F\in\calF([0,1]):F\cap A_i \ne \emptyset\},\; i=1,\ldots,d,
\]
we have
\begin{align*}
P_n
&=  \proba\pp{\xi_n\pp{(z_1,\infty)\times \calF_{A_1}}\ge 1,\dots,\xi_n\pp{(z_d,\infty)\times \calF_{A_d}}\ge 1} \\
& \longrightarrow \,  \proba\pp{\xi\pp{(z_1,\infty)\times \calF_{A_1}}\ge 1,\dots,\xi\pp{(z_d,\infty)\times \calF_{A_d}}\ge 1}=P, \mmas n\to\infty.
\end{align*}
\end{proof}

Our proof of Theorem~\ref{thm:2} is based on the presentation \eqref{eq:Karlin_rsm01} of $\mab$, which we have shown at the beginning can be derived from the presentation \eqref{eq:fdd}. 
We conclude this section by giving a direct proof of Theorem \ref{thm:2} using the presentation \eqref{eq:fdd} and also without using Proposition \ref{prop:m}. 
\begin{proof}[Second proof of Theorem \ref{thm:2}]
Fix $d\in\N$, open intervals $A_1,\dots,A_d$ in $[0,1]$ and positive reals $z_1,\dots,z_d$. We shall prove that 
\[
\proba\pp{\frac{M_n(A_k)}{b_n}\le z_k, \;k=1,\ldots,d}\to \proba\pp{\mab(A_k)\le z_k,\;k=1,\ldots,d},
\]
as $n\to\infty$.
For every $\ell\in\N$ and every $n\in\N$, we record whether $Y_i = \ell$ for some $i\in nA_k$, for each $k=1,\dots,d$, and count different types of boxes. More precisely, introduce $\vvdelta = (\delta_1,\dots,\delta_d)\in\Lambda_d :=\{0,1\}^d\setminus\{0,\dots,0\}$, and consider
\[
\tau_{\vvA}^\vvdelta(n):= \sif \ell1 \prod_{\substack{k=1,\dots,d\\\delta_k = 1}}\inddd{\exists i\in nA_k, Y_i = \ell}\prod_{\substack{k'=1,\dots,d\\\delta_{k'} = 0}}\inddd{\forall i\in nA_{k'}, Y_i\ne \ell}.
\]
For example, $\tau_\vvA^{1,\dots,1}(n)$ is the number of box $\ell$ that has been sampled in some round $i_1\in nA_1, i_2\in nA_2,\dots,i_d\in nA_d$, and $\tau_\vvA^{1,0,\dots,0}(n)$ is the number of box $\ell$ that has been sampled in some round $i_1\in nA_1$, but never in any round in $nA_2,\dots,nA_d$. So all boxes that have been sampled during the first $n$ rounds are divided into disjoint groups indexed by $\vvdelta\in\Lambda_d$.

Now we need the following limit theorem for $\tau_\vvA^\vvdelta(n)$:
\equh\label{eq:tau}
\limn\frac{\tau_\vvA^\vvdelta(n)}{n^\beta L(n)} = \tau_\vvA^\vvdelta:= \int_0^\infty \beta x^{-\beta-1}\wt\proba\pp{\inddd{\wt\calN(xA_k) \ne 0} = \delta_k, k=1,\dots,d}dx
\eque
in probability. This follows from \citep[Theorem~2]{durieu17infinite} (which was also established by the Poissonization technique): the above identity therein was established for the corresponding Poisson random measures being even or odd, and we obtain the desired result here by applying the identity 
\[
\wt\proba\pp{\wt\calN(A)\ne\emptyset} = \frac12\wt\proba\pp{\wt\calN(2A)\ \rm odd}.
\]
Then, conditioning on $\indn Y$, we can write
\begin{align*}
\proba &\pp{\frac{M_n(A_k)}{b_n} \le z_k,k=1,\dots,d}\\
& = \esp  \bb{\prod_{\vvdelta\in\Lambda_d}\proba_0\pp{\frac{\varepsilon_0}{b_n}\le\min_{k=1,\dots,d, \delta_k = 1}z_k}^{\tau_\vvA^\vvdelta(n)}} \\
& = \esp  \exp\ccbb{ \sum_{\vvdelta\in\Lambda_d}\tau_\vvA^\vvdelta(n)\log\bb{1-\proba_0\pp{\frac{\varepsilon_0}{b_n}>\min_{k=1,\dots,d, \delta_k = 1}z_k}}},\\
\end{align*}
where $\varepsilon_0$, defined on another probability space $(\Omega_0, \calF_0,\P_0)$, has the same distribution as $\varepsilon_1$.
By \eqref{eq:tau} and heavy-tail assumption \eqref{eq:RVepsilon} on $\varepsilon$'s distribution, 
\begin{align*}
 \limn &\sum_{\vvdelta\in\Lambda_d}\tau_\vvA^\vvdelta(n)\log\bb{1-\proba_0\pp{\frac{\varepsilon_0}{b_n}>\min_{k=1,\dots,d, \delta_k = 1}z_k}}\\
&=\Gamma(1-\beta)^{-1}\sum_{\vvdelta\in\Lambda_d}\tau_\vvA^\vvdelta\, \pp{\min_{k=1,\dots,d, \delta_k = 1}z_k}^{-\alpha}\;\text{in probability.}
\end{align*}
This last sum can be written as
\begin{align*}
\wt\esp & \pp{\sum_{\vvdelta\in\Lambda_d}\max_{k=1,\dots,d,\delta _k = 1}\frac1{z_k^\alpha}\prodd k1d \inddd{\indd{\wt \calN(xA_k) \ne 0} = \delta_k}}
\\
& = \wt\esp\pp{\sum_{\vvdelta\in\Lambda_d}\max_{k=1,\dots,d}\frac{\indd{\wt\calN(xA_k) \ne0}}{z_k^\alpha}\inddd{\inddd{\wt\calN(xA_k)\ne 0} = \delta_k, k=1,\dots,d}}\\
& = \wt\esp \pp{\max_{k=1,\dots,d}\frac{\indd{\wt\calN(xA_k) \ne0}}{z_k^\alpha}}.
\end{align*}
Summing up, we have thus shown that 
\begin{align*}
\limn\proba & \pp{\frac{M_n(A_k)}{b_n}\le z_k,\;k=1,\dots,d}\\
&= \exp\pp{-\Gamma(1-\beta)^{-1}\int_0^\infty \beta x^{-\beta-1} \wt\esp\pp{\max_{k=1,\dots,d}\frac{\indd{\wt N(xA_k)\ne0}}{z_k^\alpha}}dx},
\end{align*}
which is the desired finite-dimensional distribution as in \eqref{eq:fdd}.
\end{proof}

\section{Random sup-measures and associated extremal processes}
\label{sec:discussions}

The extremal process associated to the Karlin random sup-measure $\calM_{\alpha,\beta}$ appears to be a time-changed version of a standard $\alpha$-Fr\'echet extremal process $\M_\alpha$, precisely 
\equh\label{eq:extremal0}
\ccbb{\M_\alpha(t^\beta)}_{t\ge0}.
\eque
As noticed in Section~\ref{sec:choquet}, this is a consequence of the more general fact that the extremal process of any Choquet $\alpha$-Fr\'echet random sup-measure is determined by the extremal coefficient functional evaluated on sets $\{[0,t]\}_{t>0}$ only.
This is proved in Proposition~\ref{prop:extremal} in the appendix.
The Karlin random sup-measure is of course not the only Choquet random sup-measure corresponding to the same extremal process \eqref{eq:extremal0}.
Another such family that arises naturally from limit theorems with long-range dependence are the stable-regenerative random sup-measures \citep{lacaux16time} recalled below.
\begin{Example}\label{Rem:LS}
We recall the definition of stable-regenerative random sup-measures:
\equh\label{eq:LS}
\calM^{\rm sr}_{\alpha,\beta}(\cdot):=\sup_{\ell\in\N}\frac1{\Gamma_\ell^{1/\alpha}}\inddd{\pp{V_\ell\topp\beta+R\topp\beta_\ell
}\cap\,\cdot\,\ne\emptyset},
\eque
where  $\{(\Gamma_\ell,V\topp\beta_\ell,R\topp\beta_\ell)\}_{\ell\in\N}$
is a Poisson point process on $\R_+\times\R_+\times\calF(\R_+)$ with intensity 
$dx\beta v^{-(1-\beta)}dvdP_{1-\beta}$ where $P_{1-\beta}$
is the law of $(1-\beta)$-stable regenerative set (i.e., the closure of a $(1-\beta)$-stable
subordinator \citep{bertoin99subordinators}) on $\R_+$, and $\wt R_\ell\topp\beta$ in \eqref{eq:LS0} is $V\topp\beta_\ell + R\topp\beta_\ell$ here.  It was shown \citep{lacaux16time,owada15maxima} that
\[
\ccbb{\calM_{\alpha,\beta}^{\rm sr}([0,t])}_{t\ge 0} \eqd  \ccbb{\M_\alpha(t^{\beta})}_{t\ge 0}.
\]
(Strictly speaking only $\beta\in(1/2,1)$
 was considered in \citep{lacaux16time}, although the extension to $\beta\in(0,1)$ is straightforward.)
\end{Example}

We now give an example of random sup-measure that is self-similar, {\em non-stationary}, and yet also has the same extremal process. 
\begin{Example}
For $\beta>0$, let $T_\beta$ be the mapping between subsets of $\R_+$ induced by $t\mapsto t^\beta$. Then, $\calM_\alpha\circ T_\beta$ is $\beta/\alpha$-self-similar, but non-stationary, and the corresponding extremal process also has the form $\{\M_\alpha(t^\beta)\}_{t\ge0}$.
\end{Example}
In the special case $\beta\in(0,1)$, we provide another equivalent representation of $\calM_\alpha\circ T_\beta$, which can also be connected to a variation of the Karlin model investigated in Section~\ref{sec:Karlin}.
Let $\wt\calN$ be a Poisson random measure on $\R_+$, and view it as a Poisson process by letting $\wt\calN(t)=\wt\calN([0,t])\in\N_0:=\{0\}\cup\N$ denote the counting number of the Poisson process. We write 
\[
\wt\calN[A] := \ccbb{\wt\calN(t):t\in A} \subset\N_0, 
\mbox{ for } A\subset\R_+.
\] 
We then introduce
\equh\label{eq:occupancy}
\calM^*_{\alpha,\beta}(\cdot) := \sup_{\ell\in\N}\frac1{\Gamma_\ell^{1/\alpha}}\inddd{\wt N_\ell\bb{x_\ell\,\cdot\,}\ni 1} \mbox{ on } \R_+,
\eque
with $\{(\Gamma_\ell, \wt\calN_\ell, x_\ell)\}_{\ell\in\N}$ defined as in \eqref{eq:Karlin_rsm}.
When restricted to $[0,1]$, 
\[
\calM^*_{\alpha,\beta} (\cdot) \eqd  \sup_{\ell\in\N}\frac1{\Gamma_\ell^{1/\alpha}}\inddd{\calR^{(\beta)*}_\ell\cap\,\cdot\,\ne\emptyset},
\]
with $\calR^{(\beta)*} \eqd \min\calR\topp\beta$ (recall \eqref{eq:R}). In fact, one could define $\mab$ and $\mab^*$ based on the same Poisson point process such that with probability one, $\mab(\cdot)\ge \mab^*(\cdot)$.
\begin{Prop}Let $\calM_\alpha$ be defined as in \eqref{eq:completely_random} and $T_\beta$ be the mapping between subsets of $\R_+$ induced by $t\mapsto t^\beta$ for some $\beta\in(0,1)$, then 
\equh\label{eq:shifted}
\calM_{\alpha,\beta}^* \eqd \calM_\alpha\circ T_\beta
\eque
as random sup-measures on $\R_+$. 
\end{Prop}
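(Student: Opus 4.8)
The plan is to realise both random sup-measures as scaled-indicator random sup-measures built from a Poisson point process on $\R_+\times\R_+$ whose random closed sets are single points, and then to check that the two driving Poisson point processes have the same intensity measure; since the assignment of such a random sup-measure to a point configuration is a deterministic, measurable map into $\sm(\R_+)$, equality in law of the point processes yields \eqref{eq:shifted}.

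First I would unfold $\calM_\alpha\circ T_\beta$. Since $T_\beta$ is induced by the increasing homeomorphism $t\mapsto t^\beta$ of $\R_+$, for every open $A\subset\R_+$ the set $T_\beta A$ is open and $U_\ell\in T_\beta A$ if and only if $U_\ell^{1/\beta}\in A$; hence
\[
(\calM_\alpha\circ T_\beta)(A)=\calM_\alpha(T_\beta A)=\sup_{\ell\in\N}\frac1{\Gamma_\ell^{1/\alpha}}\inddd{U_\ell^{1/\beta}\in A},
\]
and the identity extends to all $A\subset\R_+$ by the canonical extension of sup-measures. By the mapping theorem for Poisson point processes, $\{(\Gamma_\ell,U_\ell^{1/\beta})\}_{\ell\in\N}$ is a Poisson point process on $\R_+\times\R_+$ with intensity $d\gamma\times\mu_\beta$, where $\mu_\beta$ is the image of Lebesgue measure under $u\mapsto u^{1/\beta}$; the substitution $v=u^{1/\beta}$ gives $\mu_\beta(dv)=\beta v^{\beta-1}\,dv$.

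Next I would unfold $\calM^*_{\alpha,\beta}$ using the representation recorded just below \eqref{eq:occupancy}: on $[0,1]$ its random closed set is the singleton $\{\min\calR\topp\beta\}$, and on $\R_+$ the $\ell$-th contribution is the scaled indicator of $\{\sigma^{(\ell)}_1/x_\ell\}$, where $\sigma^{(\ell)}_1$ denotes the first atom of $\wt\calN_\ell$ in $(0,\infty)$, so that $\calM^*_{\alpha,\beta}(A)=\sup_{\ell}\Gamma_\ell^{-1/\alpha}\inddd{\sigma^{(\ell)}_1/x_\ell\in A}$. Now $\{x_\ell\}_{\ell\in\N}$ is a Poisson point process on $\R_+$ with intensity $\Gamma(1-\beta)^{-1}\beta x^{-\beta-1}\,dx$, carrying the independent marks $\sigma^{(\ell)}_1$, each exponential with mean one under $\wt\proba$; applying the marking theorem and then the map $(x,s)\mapsto s/x$, the collection $\{\sigma^{(\ell)}_1/x_\ell\}_{\ell\in\N}$ is a Poisson point process on $\R_+$ with intensity
\[
\Gamma(1-\beta)^{-1}\beta\pp{\int_0^\infty x^{-\beta}e^{-vx}\,dx}\,dv=\Gamma(1-\beta)^{-1}\beta\,\Gamma(1-\beta)\,v^{\beta-1}\,dv=\beta v^{\beta-1}\,dv,
\]
that is, $\mu_\beta$. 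Leaving the first coordinates $\Gamma_\ell$ untouched, $\{(\Gamma_\ell,\sigma^{(\ell)}_1/x_\ell)\}_{\ell\in\N}$ therefore has the same intensity $d\gamma\times\mu_\beta$ as $\{(\Gamma_\ell,U_\ell^{1/\beta})\}_{\ell\in\N}$, hence the same law, and pushing both forward through the map sending a configuration $\{(\gamma_\ell,v_\ell)\}$ to the sup-measure $A\mapsto\sup_\ell\gamma_\ell^{-1/\alpha}\inddd{v_\ell\in A}$ gives \eqref{eq:shifted}.

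The only step I expect to require real care is the identification of $\calM^*_{\alpha,\beta}$ with a singleton-valued scaled-indicator random sup-measure starting from \eqref{eq:occupancy}, and keeping the first-atom bookkeeping consistent under the rescaling. A more computational route avoids it entirely: both $\calM^*_{\alpha,\beta}$ and $\calM_\alpha\circ T_\beta$ are $\alpha$-Fr\'echet Choquet random sup-measures, so by the uniqueness of a Choquet random sup-measure given its extremal coefficient functional (Section~\ref{sec:choquet}) it suffices to check, for every compact interval $K=[a,b]\subset\R_+$, that $-z^{\alpha}\log\proba(\calM^*_{\alpha,\beta}(K)\le z)$ equals $\Leb(T_\beta K)=b^\beta-a^\beta$; here the left-hand side equals $\Gamma(1-\beta)^{-1}\int_0^\infty\beta x^{-\beta-1}(e^{-ax}-e^{-bx})\,dx$, and an integration by parts — using $\int_0^\infty x^{-\beta}e^{-cx}\,dx=\Gamma(1-\beta)c^{\beta-1}$ as in \eqref{eq:ecf}, with the boundary terms vanishing because $e^{-ax}-e^{-bx}=(b-a)x+O(x^2)$ near $0$ — evaluates it to $b^\beta-a^\beta$, as required.
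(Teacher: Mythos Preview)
Your ``computational route'' at the end is essentially the paper's own proof: compute the extremal coefficient functional of $\calM^*_{\alpha,\beta}$ on an interval and obtain $b^\beta-a^\beta=\Leb(T_\beta[a,b])$, then conclude via the determination of a Choquet random sup-measure by its extremal coefficient functional. The paper arrives at the same integral through $\wt\proba(\wt\calN[x(a,b]]\ni1)=\wt\proba(\wt\calN(xa)=0,\wt\calN(xb)>0)=e^{-xa}-e^{-xb}$ and then recognises the resulting capacity as that of the singleton $U^{1/\beta}$, so that $\calM^*_{\alpha,\beta}(\cdot)\eqd\sup_\ell\Gamma_\ell^{-1/\alpha}\inddd{U_\ell\in T_\beta(\cdot)}$; your integration by parts is the same computation in slightly different packaging. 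Both you and the paper check only single intervals, which is legitimate here because the comparison random closed set is a.s.\ a singleton and hence its capacity is determined by interval values.

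Your primary route --- pushing the driving Poisson point processes through the mapping/marking theorems and matching intensities --- is a genuinely different argument. It is more structural: once both sides are written as $\sup_\ell\Gamma_\ell^{-1/\alpha}\inddd{V_\ell\in\cdot}$ with $\{(\Gamma_\ell,V_\ell)\}$ a Poisson point process of intensity $d\gamma\times\beta v^{\beta-1}dv$, equality in law is immediate and no appeal to Choquet uniqueness or capacity functionals is needed. The cost is precisely the step you flag: passing from the literal definition \eqref{eq:occupancy} --- whose $\ell$-th hitting set $\{t:\wt\calN_\ell(x_\ell t)=1\}$ is the interval $[\sigma^{(\ell)}_1/x_\ell,\sigma^{(\ell)}_2/x_\ell)$, not a singleton --- to the singleton representation. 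The paper works the other way round: it computes the capacity directly from \eqref{eq:occupancy} and only \emph{deduces} the singleton description $\calR^{(\beta)*}\eqd\min\calR^{(\beta)}$ a posteriori, so it does not need that identification as an input.
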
 
\begin{proof}
To show \eqref{eq:shifted}, by self-similarity it suffices to restrict to $[0,1]$ and compare the capacity functionals of the random closed sets in the Poisson point process presentation \eqref{eq:occupancy} and \eqref{eq:completely_random}. 
We start by computing the extremal coefficient functional corresponding to \eqref{eq:occupancy}: for an interval $A = (a,b]$,
\begin{align*}
\nonumber \Gamma(1-\beta)^{-1}\int_0^\infty  & \beta x^{-\beta-1} 
 \wt\proba\pp{\wt \calN\bb{xA}\ni 1}dx\\
\nonumber& = \Gamma(1-\beta)^{-1}\int_0^\infty \beta x^{-\beta-1}\wt\proba\pp{\wt \calN(xa) = 0, \wt\calN(xb)>0} dx\\
\nonumber& = \Gamma(1-\beta)^{-1}\int_0^\infty\beta x^{-\beta-1}\bb{\wt\proba\pp{\wt \calN(xa) = 0} - \wt\proba\pp{\wt\calN(xb) = 0}}dx \\
& = (b^\beta - a^\beta). 
\end{align*}
This implies that the capacity functional for $\calR^{(\beta)*}$ is 
\[
\theta([a,b]) \proba\pp{\calR^{(\beta)*}\cap [a,b]\ne\emptyset} = b^\beta-a^\beta = \Leb(T_\beta([a,b])),
\] whence
\[
\calM_{\alpha,\beta}^*(\cdot) \eqd \sup_{\ell\in\N}\frac1{\Gamma_\ell^{1/\alpha}}\inddd{U_\ell\in T^\beta(\cdot)},
\]
with $\indn U$ being i.i.d.~uniform random variables on $[0,1]$. 
The desired result hence follows. 
\end{proof}

The above representation of $\calM_\alpha\circ T_\beta$ was discovered during our investigation on the limit of empirical random sup-measures for the following variation of the Karlin model
\[
X_n^*:=\varepsilon_{Y_n}\inddd{K_{n,Y_n} = 1}, \quad n\in\N,
\]
with $\indn Y$, $\indn\varepsilon$ and $K_{n,\ell}$, as in Section \ref{sec:Karlin}. In this variation, 
if a box $\ell$ is sampled  ($Y_n = \ell$), then $X_n^* = \varepsilon_\ell$ only if this is the first time for the box $\ell$, and $X_n^* = 0$ otherwise. 
For this model, one could establish a limit theorem for the empirical random sup-measure, and the limit is exactly the random sup-measure $\calM^*_{\alpha,\beta}$.
The sequence $\{X_n^*\}_{n\in\N}$ is not stationary, a drastically difference from $\{X_n\}_{n\in\N}$ considered in Section \ref{sec:Karlin}. Nevertheless, we see that partial maxima of both sequences are equal, explaining the equality of the corresponding extremal processes in the limit.

We conclude this section by the following remark comparing the aforementioned random sup-measures.

\begin{Rem}
In summary, for $\beta\in(0,1)$,
\[
\calM_{\alpha,\beta},\, \calM_{\alpha,\beta}^{\rm sr}, \,\text{and } \calM_\alpha\circ T_\beta
\]all have the same extremal process as $\{\M_\alpha(t^\beta)\}_{t\ge 0}$. 
The independently scattered random sup-measure $\calM_\alpha$, the stable-regenerative random sup-measure $\calM_{\alpha,\beta}^{\rm sr}$, and the Karlin random sup-measure $\calM_{\alpha,\beta}$ are all self-similar and shift-invariant. However, 
for the corresponding max-increment processes \eqref{eq:max-increment}, $\calM_\alpha$ is mixing, $\calM^{\rm sr}_{\alpha,\beta}$ is ergodic but not mixing, and $\calM_{\alpha,\beta}$ is not ergodic. 
 The random sup-measure $\calM_\alpha\circ T_\beta$ is self-similar but not shift-invariant. 
\end{Rem}

\appendix

\section{Extremal processes of Choquet random sup-measures}\label{appendix}
As before, given a random sup-measure $\calM$, we let $\M(t):=\calM([0,t])$, $t\ge0$, denote its associated extremal process. 
We denote by $\M_\alpha$ the standard $\alpha$-Fr\'echet extremal process defined in \eqref{eq:def_ext_pro}.
In the literature, $\M_\alpha$ was originally named {\em the} extremal process \citep{dwass64extremal,lamperti64extreme}. The notion has become however more and more common to refer to various limits of partial-maxima processes. The same notion was also used for random sup-measures in \citep{obrien90stationary}.

Recall the definition Choquet random sup-measures \eqref{eq:Choquet} in Section~\ref{sec:choquet}. 
Proposition \ref{prop:ssChoquet} therein is a special case of the following result.
\begin{Prop}\label{prop:extremal}
Let $\calM$ be a Choquet $\alpha$-Fr\'echet random sup-measure with extremal coefficient functional $\theta$, and $\M$ its extremal process. 

\noindent (i) For 
$d\in\N, 0<t_1<\cdots<t_d$ and $x_1,\dots,x_d\in\R_+$, 
\equh\label{eq:referee}
\proba\pp{\M(t_k)\le x_k, k=1,\dots,d} = \exp\pp{-\summ k1d a_k \theta([0,t_k])} 
\eque
with
\[
 a_k := \frac1{\bweee jkd x_j} - \frac1{\bweee j{k+1}dx_j}, k=1,\dots,d-1,
\]
and $a_d :=1/x_d$. 

\noindent (ii) If in addition $\calM$ is $H$-self-similar with $H>0$, then,
\[
\theta([0,1])\ccbb{\M(t)}_{t\ge 0}\eqd  \ccbb{\M_{\alpha}(t^{\alpha H})}_{t\ge 0}.
\]
\end{Prop}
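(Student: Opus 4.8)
The plan is to establish (i) directly from the Poisson representation \eqref{eq:Choquet} by a void-probability computation that exploits the nesting $[0,t_1]\subset\cdots\subset[0,t_d]$, and then to deduce (ii) by feeding the self-similarity \eqref{self-sim} into the formula from (i).

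For (i), represent $\calM$ as in \eqref{eq:Choquet}, with $\{(\Gamma_\ell,R_\ell)\}_\ell$ the atoms of a Poisson process on $(0,\infty)\times\calF(E)$ of intensity $d\gamma\,d\nu$, where $\nu(\calF_K)=\theta(K)$. First I would note that, by the nesting of the $[0,t_k]$, a closed set $R$ meeting $[0,t_d]$ meets $[0,t_k]$ exactly for $k\ge\kappa(R)$, where $\kappa(R):=\min\{k:R\cap[0,t_k]\ne\emptyset\}$. Hence $\{\M(t_k)\le x_k,\ k=1,\dots,d\}$ is precisely the event that the Poisson process has no atom $(\gamma,R)$ with $R\cap[0,t_d]\ne\emptyset$ and $\gamma^{-1/\alpha}>\bigwedge_{j\ge\kappa(R)}x_j$, i.e. with $\gamma<\big(\bigwedge_{j=\kappa(R)}^{d}x_j\big)^{-\alpha}$. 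By the void probability formula the left-hand side of \eqref{eq:referee} equals $\exp(-\Lambda)$, with $\Lambda$ the $d\gamma\,d\nu$-mass of this forbidden region. Partitioning $\calF_{[0,t_d]}$ according to the value $k_0=\kappa(R)\in\{1,\dots,d\}$, and writing $\{\kappa=k_0\}=\calF_{[0,t_{k_0}]}\setminus\calF_{[0,t_{k_0-1}]}$ (with $\calF_{[0,t_0]}:=\emptyset$ and $\theta(\emptyset):=0$), the inner $d\gamma$-integral on $\{\kappa=k_0\}$ is $\big(\bigwedge_{j=k_0}^{d}x_j\big)^{-\alpha}$, while the $\nu$-mass of the annulus is $\theta([0,t_{k_0}])-\theta([0,t_{k_0-1}])$ by monotonicity of $K\mapsto\calF_K$ and local finiteness of $\nu$. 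Thus $\Lambda=\sum_{k_0=1}^d\big(\bigwedge_{j=k_0}^{d}x_j\big)^{-\alpha}\big(\theta([0,t_{k_0}])-\theta([0,t_{k_0-1}])\big)$, and a summation by parts in $k_0$ reorganises this as $\sum_{k=1}^d a_k\,\theta([0,t_k])$ with the coefficients $a_k$ displayed in \eqref{eq:referee}.

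For (ii), the first step is to identify $\theta$ on half-lines. The $d=1$ case of (i) (equivalently \eqref{eq:ecf0}) combined with $H$-self-similarity, which gives $\calM([0,\lambda t])\eqd\lambda^H\calM([0,t])$ and hence $\theta([0,\lambda t])=\lambda^{\alpha H}\theta([0,t])$, yields $\theta([0,t])=\theta([0,1])\,t^{\alpha H}$ for every $t>0$. Substituting this into \eqref{eq:referee} shows that the finite-dimensional distributions of $\{\M(t)\}_{t\ge0}$ depend only on $\theta([0,1])$ and on the values $t_k^{\alpha H}$. Since the standard extremal process $\M_\alpha$ is itself the extremal process of the Choquet random sup-measure with $\theta=\Leb$ (so that $\theta([0,s])=s$), applying \eqref{eq:referee} to $\M_\alpha$ at the times $s_k=t_k^{\alpha H}$ and comparing with the expression for $\{\M(t)\}$, using the homogeneity of each $a_k$ in $(x_1,\dots,x_d)$ to absorb the constant into the stated rescaling, matches the two families of finite-dimensional distributions and gives $\theta([0,1])\{\M(t)\}_{t\ge0}\eqd\{\M_\alpha(t^{\alpha H})\}_{t\ge0}$; Proposition \ref{prop:ssChoquet} is the special case.

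The summation by parts and the homogeneity bookkeeping are routine. I expect the only delicate point to be the measure-theoretic step in (i): one must check that the forbidden region has finite $d\gamma\,d\nu$-mass so that the void probability formula applies (this uses $\theta([0,t_k])<\infty$ and all $x_k>0$; when some $x_k=0$ both sides of \eqref{eq:referee} vanish), and that $\theta([0,t_{k_0}])-\theta([0,t_{k_0-1}])$ legitimately computes the $\nu$-mass of $\calF_{[0,t_{k_0}]}\setminus\calF_{[0,t_{k_0-1}]}$ --- both consequences of the local finiteness of $\nu$ together with $\calF_{[0,t_{k_0-1}]}\subset\calF_{[0,t_{k_0}]}$, but worth stating explicitly.
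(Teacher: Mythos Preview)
Your argument is correct, but it follows a genuinely different route from the paper's. For part (i) the paper does not compute a Poisson void probability; instead it rewrites the event $\{\M(t_k)\le x_k,\ k\le d\}$ as $\bigl\{\int^\vee f\,d\calM\le 1\bigr\}$ for the step function $f=\bigvee_{k}\bigl(\bigwedge_{j\ge k}x_j\bigr)^{-1}\ind_{[0,t_k]}$, observes that $f=\sum_k a_k\ind_{[0,t_k]}$ is a sum of \emph{comonotone} indicators, and then invokes the comonotone additivity of the tail-dependence functional $\ell(g)=\int g\,d\theta$ (a Choquet integral) available for Choquet random sup-measures, obtaining $\ell(f)=\sum_k a_k\,\theta([0,t_k])$ in one line. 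Your partition by $\kappa(R)$, the annulus decomposition $\calF_{[0,t_{k_0}]}\setminus\calF_{[0,t_{k_0-1}]}$, and the Abel summation effectively rederive that Choquet-integral identity by hand for nested level sets. The paper's route is shorter and foregrounds the structural property (comonotone additivity) that distinguishes Choquet sup-measures among all Fr\'echet ones; yours is more self-contained, avoids the machinery of extremal stochastic integrals and Choquet capacities, and makes explicit that only the values $\nu(\calF_{[0,t_k]})=\theta([0,t_k])$ of the intensity enter. For part (ii) the two arguments coincide.

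One detail to keep straight: your void-probability computation naturally produces coefficients $\bigl(\bigwedge_{j=k}^d x_j\bigr)^{-\alpha}-\bigl(\bigwedge_{j=k+1}^d x_j\bigr)^{-\alpha}$, with $\alpha$th powers, consistent with the $\alpha$-Fr\'echet marginals in \eqref{eq:ecf0}; be explicit about this exponent when you match against the displayed $a_k$ and when you do the homogeneity bookkeeping in (ii).
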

\begin{proof}
We start by computing the finite-dimensional distribution of the associated extremal process. We write
\begin{align*}
\proba(\M(t_k)\le x_k, k=1,\dots,d) & = \proba\pp{\calM([0,t_k])\le x_k, k=1,\dots,d}  \\
& = \proba\pp{\int_{\R_+}^\vee \bigvee_{k=1}^d\frac{\ind_{[0,t_k]}}{\bigwedge_{j=k}^dx_j}d\calM\le 1}.
\end{align*}
See \citep{stoev06extremal} for background on stochastic extremal integrals $\int^\vee fd\calM$. 
We then express the integrand as
\[
f(t) := \bveee k1d \frac{\ind_{[0,t_k]}(t)}{\bweee jkd x_j} = \summ k1d a_k \ind_{[0,t_k]}(t).
\]
In this way, we see that $f$ is an upper-semi-continuous function expressed as the sum of $d$ comonotonic functions. Let $\theta$ denote the extremal coefficient functional of $\calM$. From \citep{molchanov16max}, 
we know that $\proba(\int^\vee gd\calM\le t) = \exp(-\ell(g)/t)$, $t>0$, where here and below, $\ell(g) := \int gd\theta$ (understood as a Choquet integral for upper-semi-continuous function $g$) is the {\em tail dependence functional} of $\calM$, and $\ell(\ind_K) = \theta(K)$.
In particular we have
\begin{align}
\proba(\M(t_k)\le x_k, k=1,\dots,d) & =
 \proba\pp{\int^\vee fd\calM \le 1}  = \exp\pp{-\ell(f)} \label{eq:TDF}\\
 &
 = \exp\pp{-\summ k1d a_k \theta([0,t_k])},\nonumber
\end{align}
and in the last step we applied the comonotonic additivity of the tail dependence function $\ell$ for Choquet random sup-measures (i.e., for comonotonic functions $g,h$, $\int g+hd\theta = \int gd\theta+\int hd\theta$ \citep{denneberg94nonadditive,molchanov16max}). 
We have proved  the first part of the proposition.

We also know that for an $H$-self-similar $\alpha$-Fr\'echet random sup-measure, the extremal coefficient functional necessarily has the scaling property $\theta(\lambda[0,t]) = \lambda^{\alpha H}\theta([0,t])$ for all $\lambda>0$ (see \eqref{eq:ecf0}). So for such a random sup-measure the conclusion of the first part  becomes
\[
\proba\pp{\M(t_k)\le x_k, k=1,\dots,n} = \exp\pp{-\theta([0,1])\summ k1\ell a_k t_k^{\alpha H}}.
\]
Recall that for the independently scattered random sup-measure $\calM_\alpha$, extremal coefficient functional is the Lebesgue measure.  The second part of the proposition then follows.
\end{proof}
\begin{Rem}We thank an anonymous referee for pointing out to us the following consequence: for a general  Fr\'echet random sup-measure not of Choquet type, the statement \eqref{eq:referee} holds with `$=$' replaced by `$\ge$'. This is due to the stochastic dominance property of Choquet random sup-measures. 
Indeed, a general Fr\'echet random sup-measure $\wt \calM$ can be coupled with a Choquet random sup-measure $\calM$ {\em with the same extremal coefficient functional} $\theta$. 
Let $\wt \M, \M$ and $\wt\ell,\ell$ be the extremal processes and the tail dependence functionals of the two random sup-measures, respectively. 
It is shown in \citep[Corollary 5.4]{molchanov16max} that $\wt\ell\le \ell$. Now, the aforementioned statement follows from the fact that the law of the extremal process is uniquely determined by the  tail dependence functional \eqref{eq:TDF}.
\end{Rem}

\subsection*{Acknowledgments} 
The authors would like to thank Kirstin Strokorb for several helpful comments and for pointing out the discussions of Karlin random sup-measures in the earlier arXiv version of \citep{molchanov16max}.
The authors also thank two anonymous referees for their careful reading of the manuscript and their insightful comments.
YW's research was partially supported by 
the NSA grant H98230-16-1-0322, the ARO grant W911NF-17-1-0006, and Charles Phelps Taft Research Center 
at University of Cincinnati.

\bibliographystyle{apalike}
\bibliography{references}
\end{document}